\documentclass[leqno,11pt]{article}
\usepackage{amssymb,amsmath,amsthm,amsfonts}
\usepackage{verbatim}
\allowdisplaybreaks

\usepackage[utf8]{inputenc}

\usepackage{enumitem}

\usepackage{hyperref}

\setlength{\textwidth}{6.5in}
\setlength{\textheight}{9.in}
\setlength{\oddsidemargin}{-0.125in}
\setlength{\evensidemargin}{-0.125in}
\setlength{\topmargin}{-0.5in}

\theoremstyle{plain}
\newtheorem{theorem}{Theorem}
\newtheorem{lemma}[theorem]{Lemma}
\newtheorem{corollary}[theorem]{Corollary}
\newtheorem{proposition}[theorem]{Proposition}

\theoremstyle{definition}
\newtheorem{definition}[theorem]{Definition}

\newtheorem{example}[theorem]{Example}

\newtheorem{question}[theorem]{Open Question}

\newcommand{\R}{\mathbb R}
\newcommand{\N}{\mathbb N}

\DeclareMathOperator{\diam}{diam}

\DeclareMathOperator{\Hau}{Hau}
\DeclareMathOperator{\Top}{Top}
\DeclareMathOperator{\im}{im}

\setcounter{tocdepth}{1}

\numberwithin{theorem}{section} \numberwithin{equation}{section}

\title{Dimension results for mappings of jet space Carnot groups}
\author{Derek Jung\footnote{Supported  by U.S. Department of Education GAANN fellowship P200A150319. \hfill\break {\it Key Words and Phrases:} projection theorems, sub-Riemannian geometry, jet spaces, Hausdorff dimension \hfill\break {\it 2010 Mathematics Subject Classification:} Primary 28A78, 53C17; Secondary 58A20, 22E25, 26A16} \\
% Department of Mathematics \\ 
University of Illinois at Urbana-Champaign \\ 
%1409 West Green St. \\ 
%Urbana, IL 61801 \\
 %{\tt djjung2@illinois.edu}
}
\date{\today}

\begin{document}

\raggedbottom

\maketitle

\begin{abstract}
%We propose analogues of horizontal and vertical projections for model filiform jet space Carnot groups $J^k(\R)$. 
%Every pair consisting of a real number $t$ and a smooth function $f$ naturally defines a splitting of $J^k(\R)$. 
%More specifically, every point in $J^k(\R)$ can be uniquely written as the product of an element in the hyperplane with first coordinate equal to $t$ and an element of the image of the jet of $f$. 
%This induces a locally H\"older mapping onto the hyperplane and a locally Lipschitz mapping onto the image of the jet. 
%The gauge metric restricted to each of these hyperplanes simplifies to a snowflaked metric while  the jet of a smooth function is locally biLipschitz. 
%This enables us to show that all possible pairs of values for the Hausdorff dimension of images of sets under these mappings can be attained.
%We then discuss the effects of the horizontal and vertical mappings on Hausdorff dimension. 
%Our methods can be easily translated to define and obtain results for general jet space Carnot groups. 
We propose analogues of horizontal and vertical projections for model filiform jet space Carnot groups. 
Every pair consisting of the jet of a smooth function on $\mathbb{R}$ and a vertical hyperplane with first coordinate fixed provides a splitting of a model filiform group, which induces mappings of the group. 
We prove Marstrand-type theorems for these mappings and determine the possible Hausdorff dimensions of images of sets under these mappings. 
Analogues of projections for general jet space Carnot groups could be defined similarly.
\end{abstract}

%%%%%%%%%%%%%%%%%%%%%%%%%%%%%%%%%%%%%%%%%%%%%%%%%%%
%%%%%%%%%%%%%%%%%%%%%%%%%%%%%%%%%%%%%%%%%%%%%%%%%%%
%%%%%%%%%%%%%%%%%%%%%%%%%%%%%%%%%%%%%%%%%%%%%%%%%%%

\tableofcontents
 
\setcounter{section}{0}
\setcounter{subsection}{0}

%%%%%%%%%%%%%%%%%%%%%%%%%%%%%%%%%%%%%%%%%%%%%%%%%%%
%%%%%%%%%%%%%%%%%%%%%%%%%%%%%%%%%%%%%%%%%%%%%%%%%%%
%%%%%%%%%%%%%%%%%%%%%%%%%%%%%%%%%%%%%%%%%%%%%%%%%%%
%Section 0: Introduction
%%%%%%%%%%%%%%%%%%%%%%%%%%%%%%%%%%%%%%%%%%%%%%%%%%%
%%%%%%%%%%%%%%%%%%%%%%%%%%%%%%%%%%%%%%%%%%%%%%%%%%%
%%%%%%%%%%%%%%%%%%%%%%%%%%%%%%%%%%%%%%%%%%%%%%%%%%%

\section{Introduction}\label{sec:intro-proj}

We propose analogues of horizontal and vertical projections for jet space Carnot groups.  
This continues work over the past decade to obtain Marstrand-type results in the setting of sub-Riemannian geometry \cite{BDF:TE, BFP:PAS}.

The effect of projections in Euclidean space on Hausdorff dimension has been studied for decades.
Marstrand began the study in 1954, essentially proving that the projection of a set onto almost every plane has large Hausdorff dimension relative to that of the set \cite{M:SFG}. 
Later research in this area was performed by Kaufman in 1968 \cite{K:OHD}, Mattila in 1975 \cite{M:HDO}, and Peres and Schlag in 2000 \cite{PS:SOP} among many others.
For a greater discussion on this topic, we refer to the reader to Mattila's recent survey  \cite{M:HDP}. 

In 2012, Balogh, Durand-Cartagena, F\"assler, Mattila, and Tyson defined projections in the first Heisenberg group $\mathbb{H}^1$ \cite{BDF:TE}.
$\mathbb{H}^1$ can be identified with $\R^3$ equipped with a group operation and is a simple example of a sub-Riemannian space and of a Carnot group.
For every angle $\theta$, the authors defined $\mathbb{V}_\theta$ to be the horizontal line passing through the origin at angle $\theta$, and $\mathbb{V}^\perp_\theta$ to be its orthogonal complement in $\mathbb{R}^3$.
 The homogeneous subgroups $V_\theta$ and $V^\perp_\theta$  induce a semidirect group splitting of $\mathbb{H}^1$ for each $\theta$, and hence, induce projections $P_{\mathbb{V}_\theta}$, $P_{\mathbb{V}^\perp_\theta}$, respectively.
The main objective of the work in \cite{BDF:TE} was to determine how the Hausdorff dimensions of $P_{\mathbb{V}_\theta}(E)$ and $P_{\mathbb{V}^\perp_\theta}(E)$ compare to the Hausdorff dimension of $E$ for a Borel set $E$, much like in the Euclidean case. 
It should be noted that while each $P_{\mathbb{V}_\theta}$ is linear, a projection, and a group homomorphism, the mappings $P_{\mathbb{V}^\perp_\theta}$ are none of these in general. 
This illustrates the increased difficulty as one studies projection theorems in the sub-Riemannian setting.
The situation will be even worse for the mappings in our present study.

%%%%%%%%%%%%%%%%%%%%%%%%
\begin{comment}%%%%%%%%%
In Section \ref{sec:structure-proj}, we will lay out the most important features of the groups $J^k(\R)$. 
The most important parts of this (obligoratory) section will be defining our notation, proving identities for the group operation (Proposition \ref{jet-ind-lemma}), and describing the few crude tools we have for estimating distances.
\end{comment}%%%%%%%%%%%%%
%%%%%%%%%%%%%%%%%%%%%%%%%

The choice of the vertical planes in \cite{BDF:TE} was motivated by the fact that the restriction of the gauge metric is a snowflaked metric, which makes calculations much easier. 
This will motivate the choice of our vertical planes. 
By the identities proven in Proposition \ref{jet-ind-lemma}, we observe that the gauge metric restricted to  vertical hyperplanes with first coordinate fixed,  is snowflaked as well. 
Hence, we will take these as the images of our vertical projections. 

For the horizontal sets, the Carnot group structure of $J^k(\R)$ supplies us with a rich family of sets to complement these planes, in fact a $C^\infty(\R)$-worth of such sets!
Before we say what they are, we first define some notation. 
$J^k(\R)$ is the space of $k^{th}$-order Taylor polynomials of smooth functions on $\R$; it may be identified with $\R^{k+2}$. 
This can be equipped with a group operation $\odot$ to become a Carnot group. 
For each $f\in C^\infty(\R)$, the path $j^k(f):\R\to J^k(\R)$ is horizontal and locally biLipschitz (Proposition \ref{RW-obs}), where $j^k_x(f)$ is the $k^{th}$-order Taylor polynomial of $f$ at $x$. 

Now we can define our horizontal and vertical mappings. 
For each $t\in \R$, $f\in C^\infty(\R)$, and $p\in J^k(\R)$, there exist unique points  $J_{f,t}(p),$ $V_{f,t}(p)$ in the plane $\{x=t\}:= \{(x,u_k,\ldots ,u_0)\in J^k(\R): x=t\}$, the image of $j^k(f)$, respectively, such that $p = V_{f,t}(p) \odot J_{f,t}(p)$. 
We first show that each $V_{f,t}$ is  biLipschitz when restricted to a hyperplane $\{x=t\}$. 
The map $V_{f,t}$ has a complicated definition involving right multiplication that makes it difficult to analyze from a projection of sets viewpoint.
The mappings $V_{f,t}$ and $J_{f,t}$ are rarely idempotent, hence it would be wrong to call them projections. 
Nevertheless, it is helpful (at least for the author) to think them of as projections.
We show in Section \ref{sec:regularity}  that the mappings share some regularity. 
For all $f\in C^\infty(\R)$ and $t\in\R$,  $V_{f,t}$ is locally $\frac{1}{k+1}$-H\"older (Proposition \ref{Vft-Holder}) while  $J_{f,t}$ is locally biLipschitz (Proposition \ref{loc-Lip-proj}). 

For all $t\in \R$, $f\in C^\infty(\R)$, and Borel sets $E\subset J^k(\R)$,
\[
	0\le \dim_{\Hau} J_{f,t}(E) \le \dim_{\Hau} (\{j_x^k(f): x\in \R\}) = 1
\]
and 
\[
	0 \le \dim_{\Hau} V_{f,t} (E) \le \dim_{\Hau} (\{x=t\}) = \frac{(k+1)(k+2)}{2}. 
\]
The main questions we study in Sections \ref{sec:sets-large-proj} and \ref{sec:sets-null-proj} are: If $t\in \R$ and $f\in C^\infty(\R)$, for which pairs $(\dim_{\Hau} J_{f,t}(E) ,\dim_{\Hau}V_{f,t}) \in [0,1]\times \left[ 0,\frac{(k+1)(k+2)}{2}\right]$ are attained as $E$ varies over Borel sets in $J^k(\R)$? 
Can $E$ be chosen independently of $t$ or $f$?
If $E$ is allowed to depend on $t$ and $f$, we show that all pairs are possible (Theorem \ref{satisfying-theorem}). 
We can choose $E$ independently of $f$ if $\alpha=0$ (Corollary \ref{Vft-cor}). 
Sets independent of $t$ are more difficult to obtain due to the form of $V_{f,t}$. 
However, we show in Example \ref{line-example} that for all nonconstant linear polynomials $f$ and $0\le \alpha \le 1$, there exists a Borel set $E\subset J^k(\R)$ (independent of $t$) satisfying
\[
	\dim_{\Hau}(E) = \alpha, \quad \dim_{\Hau}(J_{f,t}(E)) = \alpha,\quad\text{and} \quad \dim_{\Hau}(V_{f,t}(E)) = (k+1)\alpha
\]
for all $t\in \R$.

We then study the question: For $f\in C^\infty(\R)$ and $t\in \R$, what are the possible pairs 
\[
	(\dim_{\Hau}(E), \dim_{\Hau}(J_{f,t}(E))) \in [0,\dim_{\Hau}J^k(\R)]\times [0,1]
\] 
and 
\[
	(\dim_{\Hau}(E), \dim_{\Hau}(V_{f,t}(E))) \in [0,\dim_{\Hau}J^k(\R)]\times [0,\dim_{\Hau}J^k(\R) - 1]
\]
 for $E$ varies over Borel sets in $J^k(\R)$?
For this question, one needs to take into account that each $J_{f,t}$ is locally Lipschitz and each $V_{f,t}$ is locally $\frac{1}{k+1}$-H\"older. 
If we take this into account, we show that all   pairs $(\dim_{\Hau}(E), \dim_{\Hau}(J_{f,t}(E)))$ are possible, albeit highly dependent on $t$ and $f$ (Proposition \ref{full-range-dim-Jft}).
We only answer the question for the possible pairs $(\dim_{\Hau}(E), \dim_{\Hau}(V_{f,t}(E)))$ when $f$ is a nonconstant linear polynomial (Proposition \ref{linear-k1}). 

In Section \ref{sec:top-dim}, we study the question: For $t\in \R$ and $f\in C^\infty(\R)$, what are the possible pairs $(\dim_{\Top} J_{f,t}(E), \dim_{\Top}V_{f,t}(E))$ as $E$ varies over Borel sets?
Noting that $\dim_{\Top} \text{im}(j^k(f)) =1$ for all $f\in C^\infty(\R)$ and $\dim_{\Top}(\{x=t\})= \frac{(k+1)(k+2)}{2}$, we show that all possible pairs are possible.
In fact, we show that the pair-attaining $E$ can be typically chosen independenty of $f$.
We prove that for all $t\in \R$ and $(a,b) \in \{0,1\}\times \{1,\ldots,k+1\}$, there exists a set $E$ such that $\dim_{\Top}J_{f,t}(E) = a$ and $\dim_{\Top} V_{f,t}(E) = b$ for all $f\in C^\infty(\R)$ (Proposition \ref{top-ind-f}). 
We refer the reader to \cite{HW:DT} for a discussion on topological dimension, which is sometimes  referred to as inductive dimension. 

We conclude by stating a few open questions concerning how $\dim_{\Hau}J_{f,t}(E)$ and $\dim_{\Hau} V_{f,t}(E)$ changes as $f$ or $t$ varies. 
We also remark that one could define these mappings similarly for general jet space Carnot groups $J^k(\R^n)$.

And, of course,  immediately following this will be the (obligatory) background section, where we briefly define the Carnot group structure of $J^k(\R)$, lay out notation, and describe the few crude tools we have to estimate distances in $J^k(\R)$.

%%%%%%%%%%%%%%%%%%%%%%%%%%%%%%%%%%%%%%%%%%%%%%%%%%%
%%%%%%%%%%%%%%%%%%%%%%%%%%%%%%%%%%%%%%%%%%%%%%%%%%%
%%%%%%%%%%%%%%%%%%%%%%%%%%%%%%%%%%%%%%%%%%%%%%%%%%%
%Section 1: Background
%%%%%%%%%%%%%%%%%%%%%%%%%%%%%%%%%%%%%%%%%%%%%%%%%%%
%%%%%%%%%%%%%%%%%%%%%%%%%%%%%%%%%%%%%%%%%%%%%%%%%%%
%%%%%%%%%%%%%%%%%%%%%%%%%%%%%%%%%%%%%%%%%%%%%%%%%%%

\section{Structure of jet space Carnot groups}\label{sec:structure-proj}

We begin this section by defining the model filiform jet space Carnot groups. 
This will follow the explanation by Warhurst in Section 3 of \cite{JSA:W}. 
For a more detailed introduction, we refer the reader to Warhurst's paper  and to Section 2 of \cite{J:BE}.
We will then transition to describing the metric structure of $J^k(\R)$. 

Fix $k\ge 1$.
As a set, $J^k(\R)$ equals the set of $k^{th}$-order Taylor polynomials of smooth functions on $\R$.
As a $k^{th}$-order Taylor polynomial depends entirely on a base point and its derivatives up to order $k$ at the point(including the $0^{th}$ derivative which is just the value of the function), $J^k(\R)$ may be identified with $\R^{k+2}$.
In Section 3 of \cite{JSA:W}, Warhurst describes how one can naturally define a Carnot group structure on $J^k(\R)$.
We will outline his construction here. 

$J^k(\R)$ is $\R^{k+2}$ equipped with the operation
\[
	(x,u_k,\ldots , u_0)\odot (y,v_k,\ldots , v_0) = (z,w_k,\cdots ,w_0),
\]
where $z=x+y,$ $w_k = u_k+v_k$, and 
\[
	w_s = u_s + v_s + \sum_{j=s+1}^k u_j\frac{y^{j-s}}{(j-s)!}, \qquad s=0,\ldots , k-1.
\]
$J^k(\R)$ forms a Carnot group when equipped with this group operation. 
The horizontal bundle $HJ^k(\R)$ is defined by the contact $1$-forms 
\[
	\omega_j := du_j -u_{j+1}dx , \qquad j=0,\ldots , k-1,
\]
and is globally framed by the left-invariant vector fields $X^{(k)} := \frac{\partial}{\partial x} + u_k \frac{\partial}{\partial u_{k-1}} + \cdots + u_1\frac{\partial}{\partial u_0}$ and $\frac{\partial }{\partial u_k}$. 
The Lie algebra of $J^k(\R)$ admits a $(k+1)$-step stratification:
\[
	Lie(J^k(\R))  = \left\langle X^{(k)} , \frac{\partial}{\partial u_k} \right\rangle \oplus \left\langle \frac{\partial }{\partial u_{k-1}} \right\rangle\oplus \cdots \oplus \left\langle \frac{\partial}{\partial u_0}\right\rangle.
\]
The minimal  dimension of $Lie(J^k(\R))$ relative to its step is the reason behind why these groups are referred to as model filiform. 

By a theorem of Chow \cite{C:US}, $J^k(\R)$ is horizontally path-connected.
More specifically, for every $p,q\in J^k(\R)$, there exists a path $\gamma$ connecting $p$ to $q$ that is absolutely continuous as a map into $\R^{k+2}$ with derivative horizontal a.e. 
Thus, we can define the \textbf{Carnot-Carath\'eodory metric} $d_{cc}$ on $J^k(\R)$, where $d_{cc}(p,q)$ is defined to be the infimum of lengths of horizontal curves connecting $p$ to $q$ in $J^k(\R)$. 
It is well-known that this forms a geodesic, left-invariant metric on $J^k(\R)$ that is one-homogeneous with respect to the dilations on $J^k(\R)$ given by
\[
	\delta_\epsilon(x,u_k,u_{k-1}, \ldots , u_0) = (\epsilon x, \epsilon u_k ,\epsilon^2u_{k-1}, \ldots , \epsilon^{k+1} u_0), \qquad \epsilon>0. 
\]
We will often refer to the Carnot-Carath\'eodory metric as the CC-distance for brevity.

In the Heisenberg groups $\mathbb{H}^n$, geodesics are very well-understood. 
For any point $p$ away from the vertical axis, there exists a unique geodesic connecting the origin to $p$.
If $p$ is on the vertical axis, there is a family of geodesics connecting the origin to $p$.
In fact, exact formulas are known for these geodesics and can be used to show that the function on $\mathbb{H}^n$ measuring distance from the origin, is analytic away from the vertical axis \cite[Corollary 2.7 and Theorem 3.1]{HZ:G}. 
This makes it much easier to estimate distances in the Heisenberg groups.

The situation is far worse in $J^k(\R)$. 
Exact formulas or even the regularity of geodesics is not well-understood in the model filiform groups.
In fact, Le Donne, Pinamonti, and Speight proved that the CC-distance is not even Pansu differentiable in horizontal directions  in the Engel group (which is isomorphic to $J^2(\R)$) \cite[Theorem 4.2]{LDPS:UDS}. 
This lack of understanding stems from the presence of abnormal geodesics in $J^k(\R)$ when $k\ge 2$. 
This makes it much harder to estimate distances in the model filiform groups than in the Heisenberg groups. 
We will need cruder tools to estimate distances in jet spaces.

First, the result of Nagel, Stein, and Wainger at \cite[Proposition 1.1]{NSW:BAM} for general Carnot groups implies that the identity map $\text{id}:J^k(\R)\to \R^{k+2}$ is locally $\frac{1}{k+1}$-H\"older with locally Lipschitz inverse. 
From this, one can use the homogeneity of the CC-distance to obtain the Ball-Box Theorem, which states here that the CC-balls $B_{cc}(r) := \{p\in J^k(\R): d_{cc}(p) \le r\}$ are uniformly equivalent to the boxes $[-\epsilon,\epsilon]^2 \times \prod_{j=2}^{k+1} [-\epsilon^j,\epsilon^j].$
Stated otherwise, one is able to estimate the distance of a point from the origin by  the point's coordinates.
\begin{corollary}\label{cor-bb}
Fix $k\ge 1$.
There exists a constant $C>0$ such that for all $(x,u_k,\ldots , u_0)\in J^k(\R)$,
\[
	\frac{1}{C} \cdot d_{cc}(0,(x,u_k,\ldots , u_0)) \le \max\{|x|, |u_k|, |u_{k-1}|^{1/2} , \ldots , |u_0|^{1/(k+1)}\} \le C\cdot d_{cc}(0,(x,u_k,\ldots ,u_0)). 
\]
\end{corollary}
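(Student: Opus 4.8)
The plan is to recognize both the CC-distance from the origin and the quantity in the middle of the claimed inequality as \emph{one-homogeneous gauges} with respect to the dilations $\delta_\epsilon$, and then to compare two such gauges by a compactness argument on a ``unit sphere''. Write $N(p) := \max\{|x|, |u_k|, |u_{k-1}|^{1/2}, \ldots, |u_0|^{1/(k+1)}\}$ for $p = (x, u_k, \ldots, u_0)$. Reading off the exponents in $\delta_\epsilon$, the coordinate $u_{k-j}$ scales by $\epsilon^{j+1}$, so $|u_{k-j}|^{1/(j+1)}$ scales by $\epsilon$; hence $N(\delta_\epsilon p) = \epsilon\, N(p)$ for all $\epsilon > 0$. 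On the other hand, since $d_{cc}$ is left-invariant, one-homogeneous, and $\delta_\epsilon(0) = 0$, we have $d_{cc}(0, \delta_\epsilon p) = \epsilon\, d_{cc}(0, p)$. Thus the corollary asserts the equivalence of two one-homogeneous functions, and it suffices to bound their ratio on a single dilation orbit transversal.

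First I would record the two elementary facts that make the comparison work. The function $N$ is continuous on $\R^{k+2}$ (a maximum of continuous functions) and vanishes only at the origin. The function $p \mapsto d_{cc}(0, p)$ is likewise continuous in the Euclidean topology: by the triangle inequality $|d_{cc}(0,p) - d_{cc}(0,q)| \le d_{cc}(p,q)$, and the locally Lipschitz inverse furnished by the Nagel--Stein--Wainger estimate bounds $d_{cc}(p,q)$ by a constant multiple of the Euclidean distance on compact sets; moreover $d_{cc}(0,p) > 0$ for $p \ne 0$ since $d_{cc}$ is a genuine (finite, by Chow) metric.

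Next, set $S := \{p : N(p) = 1\}$. Since $N(p) = 1$ forces every coordinate into a bounded interval, $S$ is closed and bounded, hence compact, and $0 \notin S$. By continuity and compactness, $p \mapsto d_{cc}(0,p)$ attains on $S$ a minimum $m$ and a maximum $M$ with $0 < m \le M < \infty$, the strict positivity of $m$ coming from $d_{cc}(0,p) > 0$ on $S$. For arbitrary $p \ne 0$ put $\epsilon := N(p) > 0$; then $N(\delta_{1/\epsilon} p) = 1$, so $\delta_{1/\epsilon} p \in S$ and $m \le d_{cc}(0, \delta_{1/\epsilon} p) = \epsilon^{-1} d_{cc}(0,p) \le M$. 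Rearranging gives $m\, N(p) \le d_{cc}(0,p) \le M\, N(p)$, which also holds trivially at $p = 0$; taking $C := \max\{M, 1/m\}$ yields the stated two-sided bound.

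I do not expect a genuine obstacle here: the real analytic content---that the CC and Euclidean metrics are quantitatively comparable on compacta---is precisely the cited Nagel--Stein--Wainger proposition, and the remaining argument is the standard principle that any two continuous dilation-homogeneous gauges vanishing only at the origin are equivalent. The only points requiring care are the bookkeeping of the scaling exponents in $N(\delta_\epsilon p)$ and confirming that the locally Lipschitz inverse indeed delivers the continuity (and hence the finiteness of $M$) needed to run the compactness argument and then globalize it by homogeneity.
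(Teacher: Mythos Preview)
Your proposal is correct and follows essentially the same route the paper sketches: the paper does not give a detailed proof but simply remarks that the Nagel--Stein--Wainger local estimate, combined with the one-homogeneity of $d_{cc}$ under the dilations $\delta_\epsilon$, yields the Ball--Box comparison. Your argument is a clean execution of that sketch---verifying the homogeneity of $N$, using NSW to get Euclidean continuity of $p\mapsto d_{cc}(0,p)$, and then running the standard compactness argument on the level set $\{N=1\}$ to globalize---so there is no substantive difference in approach.
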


By left-invariance, $d_{cc}(p,q) = d_{cc}(0,p^{-1}\odot q)$.
 We observe that one could utilize Corollary \ref{cor-bb} to estimate the CC-distance between two points $p$ and $q$ if one knew the form of $p^{-1}\odot q$.
That's the point of the following proposition. 
We write $p_x$ below to denote the first coordinate of a point $p\in J^k(\R)$. 
We will use this notation throughout this paper and it should (hopefully) not cause any confusion.

\begin{proposition}\label{jet-ind-lemma}
For all $(x,u_k,\ldots , u_0) , $ $ (y,v_k,\ldots , v_0)\in J^k(\R)$,
\[
	((x,u_k,\ldots , u_0) ^{-1}\odot(y,v_k,\ldots , v_0))_x = y-x
\]
and
\[
((x,u_k,\ldots , u_0) ^{-1}\odot(y,v_k,\ldots , v_0))_s=v_s - u_s - \sum_{j=s+1}^k \frac{(y-x)^{j-s}}{(j-s)!}\cdot u_j,\qquad s= 0, 1, \ldots , k. 
\]
\end{proposition}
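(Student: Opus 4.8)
The plan is to repackage the group law as a semidirect-product formula in which the derivative coordinates are acted on by a one-parameter family of unipotent linear maps, and then to carry out the computation of $p^{-1}\odot q$ entirely inside that structure. Abbreviate $u=(u_k,\ldots,u_0)$ and, for each $t\in\R$, define the linear map $S_t$ on $\R^{k+1}$ by $(S_t u)_s=\sum_{j=s}^k u_j\frac{t^{j-s}}{(j-s)!}$ for $s=0,\ldots,k$. Separating off the $j=s$ term shows that the displayed formulas for $w_k$ and $w_s$ combine into the single identity $w_s=(S_yu)_s+v_s$, valid for all $s=0,\ldots,k$, so the group operation is
\[
	(x,u)\odot(y,v)=(x+y,\;S_yu+v).
\]
Each $S_t$ is upper triangular with all diagonal entries equal to $1$, hence invertible.

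The crux is the one-parameter group law $S_tS_{t'}=S_{t+t'}$. Expanding $(S_tS_{t'}u)_s$ and collecting the coefficient of a fixed $u_m$ with $m\ge s$ yields, after swapping the order of summation and reindexing via $a=j-s$, $b=m-j$, the convolution $\sum_{a+b=m-s,\,a,b\ge 0}\frac{t^a}{a!}\frac{(t')^b}{b!}$, which equals $\frac{(t+t')^{m-s}}{(m-s)!}$ by the binomial theorem; this is exactly the coefficient of $u_m$ in $(S_{t+t'}u)_s$. In particular $S_0=\mathrm{Id}$ and $S_t^{-1}=S_{-t}$. This binomial/convolution identity is the entire analytic content of the proposition; everything else is formal.

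Granting this, I finish as follows. The origin is the identity, so solving $(x,u)\odot(x',u')=(0,\ldots,0)$ forces $x'=-x$ and $S_{-x}u+u'=0$, whence $(x,u)^{-1}=(-x,\,-S_{-x}u)$. Applying the group law once more,
\[
	(x,u)^{-1}\odot(y,v)=(-x,\,-S_{-x}u)\odot(y,v)=\bigl(y-x,\;-S_yS_{-x}u+v\bigr)=\bigl(y-x,\;v-S_{y-x}u\bigr),
\]
using $S_yS_{-x}=S_{y-x}$. Reading off the first coordinate gives $y-x$, and reading off the $s$-th coordinate gives $v_s-(S_{y-x}u)_s=v_s-u_s-\sum_{j=s+1}^k\frac{(y-x)^{j-s}}{(j-s)!}\,u_j$, which is the asserted formula (the case $s=k$ being the empty sum $v_k-u_k$).

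I expect the only genuine obstacle to be the bookkeeping in the double sum proving $S_tS_{t'}=S_{t+t'}$: one must swap the order of summation and reindex so that the inner sum is visibly a binomial expansion of $(t+t')^{m-s}$. An equivalent but messier route avoids introducing $S_t$ altogether, substituting the explicit inverse coordinates $u_s'=-u_s-\sum_{j=s+1}^k u_j\frac{(-x)^{j-s}}{(j-s)!}$ directly into the definition of $\odot$ and collecting the coefficient of each $u_m$; this produces the same binomial identity but with more cumbersome indices, so I prefer the operator formulation above.
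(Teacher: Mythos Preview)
Your proof is correct. It rests on the same binomial identity as the paper's proof, but packaged more conceptually. The paper computes the inverse coordinates explicitly as $((x,u)^{-1})_t = -\sum_{j=t}^k \frac{(-x)^{j-t}}{(j-t)!}\,u_j$, substitutes these directly into the group law, swaps the order of the resulting double sum, and recognizes the binomial expansion of $(y-x)^{j-s}$ in one stroke. Your route through the operators $S_t$ and the one-parameter group law $S_tS_{t'}=S_{t+t'}$ isolates exactly that binomial identity as the single nontrivial step and makes the semidirect-product structure $J^k(\R)\cong\R\ltimes\R^{k+1}$ explicit; this buys a cleaner computation of the inverse and of $p^{-1}\odot q$, plus a reusable structural observation, at the cost of a few lines of setup. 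The paper's bare-hands approach reaches the answer with no preliminary definitions but with messier index bookkeeping---precisely the ``equivalent but messier route'' you describe in your final paragraph.
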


\begin{proof}
It is easy to check
\[
((x,u_k, \ldots , u_0)^{-1})_t = - \sum_{j=t}^k \frac{ (-x)^{j-t} }{(j-t)!} u_j, \quad t= 0,\ldots , k.
\]
Thus, 
\begin{align*}
((x,u_k,\ldots , u_0)^{-1}\odot ( y,v_k,\ldots , v_0))_s &= v_s - \sum_{t=s}^k \sum_{j=t}^k \frac{y^{t-s}}{(t-s)!} \cdot 
\frac{(-x)^{j-t}}{(j-t)!} \cdot u_j\\
& = v_s - \sum_{j=s}^k \sum_{t=s}^j \frac{y^{t-s}}{(t-s)!} \cdot 
\frac{(-x)^{j-t}}{(j-t)!} \cdot u_j\\ 
& = v_s - \sum_{j=s}^k \sum_{t=s}^j \binom{j-s}{t-s}	y^{t-s}(-x)^{j-t}	\cdot \frac{u_j}{(j-s)!}	\\
& = v_s - \sum_{j=s}^k \frac{(y-x)^{j-s}}{(j-s)!} \cdot u_j, 
\end{align*}
where the last equality comes from the Binomial Theorem.
\end{proof}

Our other main tool for estimating distances will be an observation from Rigot and Wenger. 
First, we need to establish some notation.
\begin{definition}{(jets of functions)}
For all $f\in C^{k+1}(\R)$, define the path  $j^k(f):\R\to J^k(\R)$ by
\[
	j^k(f)(x) := (x,f^{(k)}(x), f^{(k-1)}(x), \ldots , f(x)). 
\]
We will usually write $j^k(f)(x)$ as $j_x^k(f)$.
\end{definition} 

Due to  the construction of $J^k(\R)$, the jet of a $C^{k+1}$-smooth function is a horizontal curve.
By calculating the length of a subcurve, one can then directly estimate distances in  jet spaces.
This is what Rigot and Wenger observed.
\begin{proposition}\label{RW-obs}\cite[Pages 4-5]{RW:LNE} 
Fix $f\in C^{k+1}(\R)$.
The path $j^k(f):\R\to J^k(\R)$ is horizontal.
Moreover, there exists $C>0$ such that for all $x,y \in \R$,
\[
	\frac{1}{C} \cdot |x-y| \le d_{cc}(j_x^k(f),j_y^k(f)) \le \sup_{t\in [x,y]} \left( 1 + (f^{(k+1)} (t))^2 \right)^{1/2}|x-y|.
\]
In particular, $j^k(f):\R\to J^k(\R)$ is locally biLipschitz.
\end{proposition}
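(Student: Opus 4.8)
The plan is to verify horizontality by a direct computation of the velocity of $\gamma(x) := j_x^k(f)$, then sandwich the CC-distance between the sub-Riemannian length of $\gamma$ from above and a first-coordinate estimate from below.

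First I would differentiate $\gamma(x) = (x, f^{(k)}(x), \ldots, f(x))$ in the standard coordinates on $\R^{k+2}$, obtaining $\gamma'(x) = (1, f^{(k+1)}(x), f^{(k)}(x), \ldots, f'(x))$. Testing the contact forms $\omega_j = du_j - u_{j+1}\,dx$ along $\gamma$, the $u_j$-slot of $\gamma'(x)$ equals $f^{(j+1)}(x)$ while the coefficient $u_{j+1}$ reads off as $f^{(j+1)}(x)$, so each $\omega_j(\gamma'(x))$ vanishes. Equivalently, since the lower slots of $X^{(k)}$ evaluated at $\gamma(x)$ pick up exactly $f^{(k)}(x), \ldots, f'(x)$, one has $\gamma'(x) = X^{(k)} + f^{(k+1)}(x)\,\frac{\partial}{\partial u_k}$ at the point $\gamma(x)$, which lies in $HJ^k(\R)$. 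This establishes horizontality.

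For the upper bound I would use that $d_{cc}$ is bounded by the length of any horizontal curve joining the endpoints, together with the convention that $X^{(k)}$ and $\frac{\partial}{\partial u_k}$ form an orthonormal frame for $HJ^k(\R)$. Then $\|\gamma'(x)\| = (1 + (f^{(k+1)}(x))^2)^{1/2}$, and integrating over $[x,y]$ (taking $x \le y$) gives $d_{cc}(j_x^k(f), j_y^k(f)) \le \int_x^y (1 + (f^{(k+1)}(t))^2)^{1/2}\,dt \le \sup_{t \in [x,y]} (1 + (f^{(k+1)}(t))^2)^{1/2}\,|x-y|$. For the lower bound I would invoke the tools just proven: by left-invariance and Proposition \ref{jet-ind-lemma} the first coordinate of $j_x^k(f)^{-1} \odot j_y^k(f)$ is $y - x$, so Corollary \ref{cor-bb} yields $d_{cc}(j_x^k(f), j_y^k(f)) \ge \frac{1}{C}\,|y-x|$. (Alternatively, since $X^{(k)}$ is the only frame field with a nonzero $\partial/\partial x$-component, projection to the $x$-coordinate is $1$-Lipschitz and gives the same bound directly with $C = 1$.) Finally, the local biLipschitz claim follows because on any bounded interval the continuous map $t \mapsto (1 + (f^{(k+1)}(t))^2)^{1/2}$ is bounded, so the two-sided estimate pins $j^k(f)$ between two linear bounds there.

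I do not expect a serious obstacle: the substance is the horizontality check plus the elementary length computation, and the lower bound is immediate once one observes that the $x$-coordinate cannot contract under $d_{cc}$. The only point needing care is fixing the orthonormal-frame convention so that $\|\gamma'(x)\|$ genuinely equals $(1 + (f^{(k+1)}(x))^2)^{1/2}$, since this is exactly what produces the explicit constant appearing in the upper estimate.
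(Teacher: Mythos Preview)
Your proposal is correct and follows essentially the same approach as the paper: verify horizontality by computing $\gamma'(x) = X^{(k)} + f^{(k+1)}(x)\,\partial/\partial u_k$, bound $d_{cc}$ above by the sub-Riemannian length of $\gamma|_{[x,y]}$, and bound it below via Corollary~\ref{cor-bb} applied to the first coordinate of $j_x^k(f)^{-1}\odot j_y^k(f)$. Your additional remarks (the explicit contact-form check and the alternative $1$-Lipschitz projection argument for the lower bound) are sound elaborations but do not change the route.
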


\begin{proof}
For $f\in C^{k+1} (\R)$, $j^k(f)$ is $C^1$ and horizontal  with
\[
	(j^k(f))'(x) = X^{(k)}_{j_x^k(f)} +\left. f^{(k+1)}(x)\cdot  \frac{\partial}{\partial u_k}\right|_{j_x^k(f)}.
\]
By the definition of the CC-distance, 
\[
	d_{cc}(j_x^k(f), j_y^k(f)) \le \text{length} (j^k(f)|_{[x,y]})  \le \sup_{t\in [x,y]} \left( 1 + (f^{(k+1)} (t))^2 \right)^{1/2}|x-y|
\]
for all $x,y\in \R$. 
The other inequality follows from Corollary \ref{cor-bb} and the fact
\[
	((x,u_k,\ldots , u_0) ^{-1}\odot(y,v_k,\ldots , v_0))_x = y-x.
\] 
The final statement follows from the fact that $f^{(k+1)}$ is bounded on compact sets. 
\end{proof}

%%%%%%%%%%%%%%%%%%%%%%%%%%%%%%%%%%%%%%%%%%%%%%%%%%%
%%%%%%%%%%%%%%%%%%%%%%%%%%%%%%%%%%%%%%%%%%%%%%%%%%%
%%%%%%%%%%%%%%%%%%%%%%%%%%%%%%%%%%%%%%%%%%%%%%%%%%%
%Section 2: Definition of horizontal and vertical mappings
%%%%%%%%%%%%%%%%%%%%%%%%%%%%%%%%%%%%%%%%%%%%%%%%%%%
%%%%%%%%%%%%%%%%%%%%%%%%%%%%%%%%%%%%%%%%%%%%%%%%%%%
%%%%%%%%%%%%%%%%%%%%%%%%%%%%%%%%%%%%%%%%%%%%%%%%%%%

\section{Horizontal and vertical mappings}\label{sec:havm}

Before we define our mappings, we will first step back and consider how Balogh, Durand-Cartagena, F\"assler, Mattila, and Tyson  defined horizontal and vertical projections in the first Heisenberg group in \cite{BDF:TE}.  
This is, after all, the motivation behind how we will define our horizontal and vertical mappings.

The first Heisenberg group $\mathbb{H}^1$ is $\R^{3}$ equipped with the group law
\[
	(z,t) \ast (z',t') = (z+z' , t+t' + 2\omega(z,z')),
\]
where $\omega$ is  the standard symplectic form defined by  $\omega((x,y), (x',y')) :=  yx' -y'x.$
The authors of \cite{BDF:TE, BFP:PAS} equip $\mathbb{H}^1$ with the \textbf{Kor\'anyi metric}:
\[
	d_K (p,p') := || p^{-1} \ast p'||, \quad \text{where } ||p||_K := (|z|^4 + t^2)^{1/4}.
\]
It is easy to see that this is equivalent to the gauge metric on $\mathbb{H}^1$:
\[
	d_G(p,p') := ||p^{-1}\ast p'|| , \quad\text{where } ||p|| := |z| + t^{1/2}.
\]
We also note that by the result of Nagel, Stein, and Wainger \cite[Proposition 1.1]{NSW:BAM}, the Kor\'anyi metric is equivalent to the Carnot-Carath\'eodory metric on $\mathbb{H}^n$. 

The authors of \cite{BDF:TE} considered  projections onto horizontal and vertical subgroups in $\mathbb{H}^1$.
Let $V_\theta$ be the (horizontal) line in $\R^2\times \{0\}$ passing through the origin at angle $\theta$, and
note that the Kor\'anyi metric agrees with the standard Euclidean metric on $V_\theta$.
Moreover, the Kor\'anyi metric takes on a simple form on the orthogonal complement $V_\theta^\perp$:
\[
	d_K ((aie^{i\theta},t) , (a'ie^{i\theta},t')) = ( |a-a'|^4 + (t-t')^2)^{1/4}.
\]
Finally, for each $\theta$, we have a semidirect group splitting  $\mathbb{H}^1= V_\theta \rtimes V_\theta^\perp$. 
In particular, for all $p \in \mathbb{H}^1$, there exist unique $P_{V_\theta}(p) \in V_\theta$ and $P_{V_\theta^\perp}(p) \in V_\theta^\perp$ such that
\[	
	p = P_{V_\theta}(p) \ast P_{V_\theta^\perp}(p). 
\]
They then proceed to consider the effect of $P_{V_\theta}$ and $P_{V_\theta^\perp}$ on Hausdorff dimension.

%%%%%%%%%%%%%%%%%%%%%%%%%%
\begin{comment}%%%%%%%%%%%%%%%

The authors define the horizontal subgroups to be the isotropic linear subspaces $\mathbb{V}$ of $\R^{2n}\times \{0\}\subset \R^{2n+1}$, where a subspace is isotropic if the standard symplectic form vanishes on it.
For a horizontal subgroup $\mathbb{V}= V\times \{0\}$, they define $\mathbb{V}^\perp = V^\perp\times \R$, where $V^\perp$ is the standard orthogonal complement of $V$ in $\R^{2n}$. 
Then for each horizontal subgroup $\mathbb{V}$, one has semidirect group splitting $\mathbb{H}^n = \mathbb{V}^\perp \rtimes \mathbb{V}.$
In other words, for all $p\in \mathbb{H}^n$, there exist uniquely $P_{\mathbb{V}}(p)\in \mathbb{V}$ and $P_{\mathbb{V}^\perp}(p)\in \mathbb{V}^\perp$ for which $p= P_{\mathbb{V}}(p)\ast P_{\mathbb{V}^\perp}(p)$. 

In the case of the first Heisenberg group, these isotropic linear subspaces are just lines passing through the origin contained in $\R^2\times \{0\}$. 
Let $V_\theta$ be such a line passing through the origin at angle $\theta$.
On $V_\theta$, the Kor\'anyi metric is just the restriction of the metric on $\R^3$.
For points $(aie^{i\theta}, t), (a'ie^{i\theta},t') \in V_\theta^\perp$, the Kir\'anyi metric takes on the simple form 
\[
	d_K ((aie^{i\theta}, t), (a'ie^{i\theta},t')) = ((a-a')^4 + (t-t')^2)^{1/4}.
\]

In the first Heisenberg group, 
\end{comment}%%%%%%%%%%%%%
%%%%%%%%%%%%%%%%%%%%%%%%%

We will seek to find a similar splitting of our jet spaces. 
Rather than a Kor\'anyi metric, we will use a gauge distance on $J^k(\R)$.
\begin{definition}{(Gauge distance of $J^k(\R)$)}
For all $p,q \in J^k(\R)$, define the \textbf{gauge distance}
\[
	d(p,q) := ||p^{-1}\odot q||, \qquad\text{where } ||(x,u_k,\ldots ,u_0)||:= |x| + \sum_{j=0}^k|u_j |^{1/(k+1-j)}.
\]
\end{definition}
While the gauge distance isn't an actual metric in the metric space sense, it is  equivalent to the CC-metric thanks to Corollary \ref{cor-bb}. 
\begin{proposition}\label{Gauge-cc}
There exists a constant $C>0$ such that for all $p,q\in J^k(\R)$, 
\[
	\frac{ d(p,q)}{C} \le d_{cc}(p,q) \le Cd(p,q).
\]
\end{proposition}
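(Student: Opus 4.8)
The plan is to deduce Proposition \ref{Gauge-cc} directly from the Ball-Box estimate in Corollary \ref{cor-bb}, since both the gauge distance and the CC-distance are defined via left-translation to the origin. By left-invariance we have $d_{cc}(p,q) = d_{cc}(0, p^{-1}\odot q)$ and $d(p,q) = \|p^{-1}\odot q\|$, so after writing $w := p^{-1}\odot q = (x,u_k,\ldots,u_0)$ it suffices to prove that there is a constant $C>0$ with
\[
	\frac{\|w\|}{C} \le d_{cc}(0,w) \le C\|w\|
\]
for every $w\in J^k(\R)$. Thus the two-point statement reduces to a one-point comparison between the homogeneous norm $\|w\|$ and the CC-distance from the origin.

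\medskip

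The core of the argument is then purely elementary: I would show that $\|w\|$ is comparable to the max-expression $M(w) := \max\{|x|,|u_k|,|u_{k-1}|^{1/2},\ldots,|u_0|^{1/(k+1)}\}$ appearing in Corollary \ref{cor-bb}, and then invoke that corollary. On one hand each summand $|u_j|^{1/(k+1-j)}$ equals one of the terms in the max, so $M(w)\le \|w\|$ immediately from the definition $\|w\| = |x| + \sum_{j=0}^k |u_j|^{1/(k+1-j)}$. On the other hand, $\|w\|$ is a sum of $k+2$ terms each of which is at most $M(w)$, giving $\|w\| \le (k+2)\,M(w)$. Hence
\[
	M(w) \le \|w\| \le (k+2)\,M(w).
\]
Combining this with the inequalities $\frac{1}{C_0}\,d_{cc}(0,w) \le M(w) \le C_0\,d_{cc}(0,w)$ supplied by Corollary \ref{cor-bb} yields the claimed two-sided bound with a new constant depending only on $C_0$ and $k$, and then re-translating by left-invariance recovers the statement for arbitrary $p,q$.

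\medskip

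I do not expect any serious obstacle here; the only point requiring a little care is bookkeeping the constants, making sure the final $C$ in the proposition absorbs both the factor $(k+2)$ from the norm comparison and the constant $C_0$ from Corollary \ref{cor-bb}, and confirming that the exponents $1/(k+1-j)$ in the definition of $\|\cdot\|$ line up exactly with the exponents $1/(k+1-j)$ on $|u_j|$ in the Ball-Box max (i.e.\ that $|u_0|$ carries exponent $1/(k+1)$, $|u_k|$ carries exponent $1$, and so on). Once that indexing is checked, the proof is a one-line chain of inequalities after the left-invariance reduction.
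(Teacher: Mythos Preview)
Your proposal is correct and matches the paper's approach: the paper does not give a separate proof of Proposition \ref{Gauge-cc} but simply states it as an immediate consequence of Corollary \ref{cor-bb}, and your argument spells out exactly that deduction via left-invariance and the elementary comparison $M(w)\le \|w\|\le (k+2)M(w)$.
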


In hoping to replicate the construction for $\mathbb{H}^1$, we will first find a family of vertical sets on which the restriction of the gauge distance takes on a simple form.  
Suppose $(x,u_k,\ldots , u_0), (y,v_k,\ldots  , v_0)$ are two points such that 
\begin{equation}\label{simple-form}
	((x,u_k,\ldots ,u_0)^{-1}\odot (y,v_k,\ldots , v_0))_s = v_s -u_s \qquad \text{for all } s=0,\ldots , k.
\end{equation}
Then
\[
	d((x,u_k,\ldots ,u_0)^{-1}\odot (y,v_k,\ldots , v_0)) = |x-y| + \sum_{j=0}^k |v_s-u_s|^{1/(k+1-j)}. 
\]
(We think that this would be a pretty simple form for the distance!)
From Proposition \ref{jet-ind-lemma}, we see that (\ref{simple-form}) holds if $x=y$. 
Thus, for our vertical sets, we will choose the planes 
\[
	\{x=t\} := \{(x,u_k,\ldots , u_0)\in J^k(\R): x=t\}, \qquad t\in \R.
\]

We now seek a horizontal set that induces a splitting of $J^k(\R)$ when coupled with each of the planes $\{x=t\}$.
Fortunately, by the Carnot group structure of $J^k(\R)$, we have a whole $C^{k+1}(\R)$-family of such sets- images of jets of functions in $C^{k+1}(\R)$!
For simplicity, we will primarily consider functions in $C^\infty(\R)$ in this paper, but it would be interesting to explore if anything would change if we allowed all functions in $C^{k+1}(\R)$. 

Fix $f\in C^\infty(\R)$.
For all $p= (x,u_k,\ldots , u_0) \in J^k(\R)$,
\begin{equation}\label{splitting-product}
	p = (p\odot j_{x-t}^k(f)^{-1}) \odot j_{x-t}^k(f),
\end{equation}
where $p\odot j_{x-t}^k(f)^{-1}  \in \{x=t\}$ and $j_{x-t}^k(f) \in \text{im}(j^k(f)) := \{ j_y^k(f):y\in \R\}$.
Moreover, it isn't hard to see that $ p\odot j_{x-t}^k(f)^{-1} , j_{x-t}^k(f)$ are the unique points in $\{x=t\}, \text{im}(j^k(f))$, respectively, for which (\ref{splitting-product}) holds. 
Indeed, suppose
\[
	(x,u_k,\ldots , u_0) = q \odot j_s^k(f)
\]
for some $q \in \{x=t\}$ and $s\in \R$. 
Then
\[
	s = (j_s^k(f))_x = (q^{-1}\odot (x,u_k,\ldots , u_0))_x = x-t .
\]
From there,
\[
	q = (x,u_k,\ldots , u_0) \odot j_{x-t}^k(f)^{-1}. 
\]
For each $f\in C^\infty(\R)$ and $t\in \R$, we define the \textbf{vertical mapping} $ V_{f,t}:J^k(\R)\to \{x=t\}$  by 
\[
	V_{f,t} (x,u_k,\ldots , u_0 ) := (x,u_k,\ldots , u_0)\odot j_{x-t}^k(f)^{-1}
\]
and the \textbf{horizontal mapping} $J_{f,t}:J^k(\R)\to \im (j^k(f))$  by 
\[
	J_{f,t} (x,u_k,\ldots , u_0) := j_{x-t}^k(f). 
\]
Then
\[
	p = V_{f,t} (p) \odot J_{f,t}(p)
\]
for all $p\in J^k(\R)$.

The planes $\{x=t\}$ are clearly not vertical subspaces or closed under $\odot$ unless $t=0$.
Also, the horizontal sets $\text{im}(j^k(f))$ are not subgroups of $J^k(\R)$ in general. 
However, by left-invariance, the left-cosets $p\odot \text{im}(j^k(f))$ are isometrically equivalent with respect to the CC-distance. 

We emphasize that these mappings are not linear projections much less idempotent in general.
 More specifically, it is not the case that $V_{f,t} = V_{f,t} \circ V_{f,t}$ or $J_{f,t} = J_{f,t} \circ J_{f,t}$ in general.
It is true that
\[
	\text{im}(V_{f,t}) = \{x=t\} \qquad \text{and} \qquad \text{im}(J_{f,t}) = \text{im}(j^k(f)).
\]
However, for all $p\in \{x=t\}$,
\[
	V_{f,t}(p) = p\odot j_0^k(f)^{-1} = p
\]
if and only if $j_0^k(f) = 0$.
Also, for all $j_x^k(f) \in \text{im}(j^k(f))$,
\[
	J_{f,t}(j_x^k(f)) = j_{x-t}^k(f) = j_x^k(f)
\]
if and only if $t=0$. 
It follows that   $V_{f,t} = V_{f,t} \circ V_{f,t}$ if and only if $j_0^k(f) =0$, and $J_{f,t} = J_{f,t}\circ J_{f,t}$ if and only if $t=0$. 

As in the Heisenberg group, we will be interested in the effect of $V_{f,t} $ and $J_{f,t}$ on dimensions of Borel sets, both topological and Hausdorff.
We will also be interested in the possibilities of pairs $(\dim_{\Hau}V_{f,t}(E),\dim_{\Hau}J_{f,t}(E))$ as $E$ ranges over Borel sets. 
The beauty of studying the effect of these mappings on sets is that we now have three parameters to play with:
the Borel set $E$, the hyperplane parameter $t$, and the smooth function $f$.

%%%%%%%%%%%%%%%%%%%%%%%%%%%%%%%%%%%%%%%%%%%%%%%%%%%
%%%%%%%%%%%%%%%%%%%%%%%%%%%%%%%%%%%%%%%%%%%%%%%%%%%
%%%%%%%%%%%%%%%%%%%%%%%%%%%%%%%%%%%%%%%%%%%%%%%%%%%
%Simple examples
%%%%%%%%%%%%%%%%%%%%%%%%%%%%%%%%%%%%%%%%%%%%%%%%%%%
%%%%%%%%%%%%%%%%%%%%%%%%%%%%%%%%%%%%%%%%%%%%%%%%%%%
%%%%%%%%%%%%%%%%%%%%%%%%%%%%%%%%%%%%%%%%%%%%%%%%%%%

\subsection{Simple examples}\label{sec:simple-proj}

Before we dive too deeply into studying the maps $J_{f,t}$ and $V_{f,t}$, we will consider a couple of examples that, at first thought, should be simple in terms of studying the Hausdorff dimensions of their images under $J_{f,t} $ and $ V_{f,t}$.

\begin{example}\label{plane-example}
Fix $t\in \R$. 
For $f\in C^\infty(\R)$, we will consider  the horizontal image $J_{f,t}(\{x=t\})$ and the vertical image $V_{f,t}(\{x=t\})$ of the plane $\{x=t\}$.
In this example, we will obtain dimension results that are independent of $f$. 

For all $p\in \{x=t\}$, $J_{f,t}(p) = j_0^k(f).$
Hence, 
\[
	J_{f,t}(\{x=t\}) = \{j_0^k(f)\},
\] 
and
\[
	\dim_{\Hau} (J_{f,t}(\{x=t\}) = \dim_{\Top} (J_{f,t}(\{x=t\})) = 0.
\]

By Lemma \ref{right-multiplication}, for all $p\in \{x=t\}$, 
\[
	V_{f,t} (p) = p \odot j_0^k(f)^{-1} = p - j_0^k(f).
\]
Here, $p-j_0^k(f)$ represents the vector difference of $p$ and $j_0^k(f)$ when both are viewed as elements of $\R^{k+2}$. 
This implies
\[
	V_{f,t}(\{x=t\}) = \{x=t\} - j_0^k(f)= \{x=t\}.
\]
We may conclude
\[
	\dim_{\Top} (V_{f,t}(\{x=t\}))  = \dim_{\Top}(\{x=t\}) = k+1	
\]
and 
\[
		\dim_{\Hau} (V_{f,t}(\{x=t\})) = \dim_{\Hau} (\{x=t\})  = \frac{(k+1)(k+2)}{2},
\]
where both equalities are independent of the function $f$.
\end{example}

A careful examination of our work shows something remarkable: the restriction of $V_{f,t}$ to the plane $\{x=t\}$  is given by subtraction by a fixed vector. This is a homeomorphism of $\{x=t\}$, which implies that $V_{f,t}$ preserves the topological dimension of subsets of $\{x=t\}$. 
Moreover, the gauge distance behaves very well with respect to subtraction by a fixed element: 
\[
	d(V_{f,t}(p) , V_{f,t}(q)) = d(p - j_0^k(f) , q-j_0^k(f)) = d(p,q), \quad p,q\in \{x=t\}.
\]
By Proposition \ref{Gauge-cc}, $V_{f,t}|_{\{x=t\}}$ being  $d$-isometric implies that $V_{f,t}|_{\{x=t\}} $ is  $d_{cc}$-biLipschitz. 
We  mark all of this down  in a proposition.

\begin{proposition}\label{bilip-Vft}
Fix $t\in \R$ and $f\in C^\infty(\R)$.
Then 
\[
	\dim_{\Top}(V_{f,t}(E)) = \dim_{\Top}(E) 
\]
for all $E\subset \{x=t\}$.
Moreover, the restriction $V_{f,t}|_{\{x=t\}}:\{x=t\}\to \{x=t\}$ is biLipschitz when $\{x=t\}$ is equipped with the  restriction of the Carnot-Carath\'eodory distance. 
\end{proposition}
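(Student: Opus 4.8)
The plan is to reduce everything to the single structural fact, already previewed in Example \ref{plane-example}, that the restriction of $V_{f,t}$ to $\{x=t\}$ is nothing more than Euclidean translation by the fixed vector $-j_0^k(f)$. First I would record why this is so. If $p=(t,u_k,\ldots,u_0)\in\{x=t\}$, then $p_x-t=0$, so by definition $V_{f,t}(p)=p\odot j_0^k(f)^{-1}$. Since $j_0^k(f)$ has vanishing first coordinate, the correction terms in the group law (equivalently, the polynomial terms in Proposition \ref{jet-ind-lemma}) all carry a factor $0^{j-s}=0$ and vanish; hence right multiplication by $j_0^k(f)^{-1}$ acts as coordinatewise subtraction, giving $V_{f,t}(p)=p-j_0^k(f)$ in the sense of $\R^{k+2}$. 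This translation fixes the first coordinate, so $V_{f,t}$ maps $\{x=t\}$ bijectively onto itself.

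For the topological statement I would then argue purely topologically. The map $p\mapsto p-j_0^k(f)$ is a self-homeomorphism of $\{x=t\}$ (it is a Euclidean translation, with continuous inverse the translation by $+j_0^k(f)$), so its restriction to any $E\subset\{x=t\}$ is a homeomorphism onto $V_{f,t}(E)$. Because topological (inductive) dimension is a topological invariant, $\dim_{\Top}(V_{f,t}(E))=\dim_{\Top}(E)$ follows at once.

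For the metric statement I would first show that $V_{f,t}|_{\{x=t\}}$ preserves the gauge distance $d$, then upgrade this to a biLipschitz statement for $d_{cc}$ via Proposition \ref{Gauge-cc}. Given $p,q\in\{x=t\}$, both $V_{f,t}(p)$ and $V_{f,t}(q)$ again lie in $\{x=t\}$, so Proposition \ref{jet-ind-lemma} applies with equal first coordinates, i.e.\ $y-x=0$: every coordinate of $V_{f,t}(p)^{-1}\odot V_{f,t}(q)$ reduces to the plain difference of the corresponding coordinates, and subtracting the common vector $j_0^k(f)$ from both points leaves these differences unchanged. Hence $V_{f,t}(p)^{-1}\odot V_{f,t}(q)=p^{-1}\odot q$, and therefore $d(V_{f,t}(p),V_{f,t}(q))=d(p,q)$. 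Finally, Proposition \ref{Gauge-cc} furnishes a constant $C$ with $d/C\le d_{cc}\le Cd$; combining this with the gauge isometry yields $d_{cc}(V_{f,t}(p),V_{f,t}(q))\le C^2\,d_{cc}(p,q)$ together with the matching lower bound, so the restriction is $d_{cc}$-biLipschitz with constant $C^2$.

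The proof is essentially bookkeeping, so there is no serious obstacle; the one point demanding care is that the gauge $d$ is not a genuine metric (the triangle inequality may fail), and Proposition \ref{Gauge-cc} is exactly what licenses passing from it to the honest metric $d_{cc}$. For this reason I would not try to deduce the topological-dimension claim from the $d$-isometry directly, but instead keep the two halves of the statement logically separate, handling the dimension claim through the Euclidean homeomorphism picture above and the biLipschitz claim through the gauge isometry combined with Proposition \ref{Gauge-cc}.
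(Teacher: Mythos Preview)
Your proposal is correct and matches the paper's approach: both reduce to the fact (from Example \ref{plane-example}) that $V_{f,t}|_{\{x=t\}}$ is Euclidean translation by $-j_0^k(f)$, deduce the topological-dimension claim from the resulting homeomorphism, and obtain the biLipschitz statement by combining the gauge-isometry with Proposition \ref{Gauge-cc}. Your version is slightly more detailed (the explicit $C^2$ constant, the caveat about $d$ not being a genuine metric), but the structure is identical; the paper invokes Lemma \ref{right-multiplication} rather than Proposition \ref{jet-ind-lemma} to see the translation, which is a cosmetic difference.
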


We make the remark  that it won't make a difference whether we use $d$ or $d_{cc}$ to compute Hausdorff dimension as they are biLipschitz equivalent by Proposition \ref{Gauge-cc}.
It will be useful for the reader to keep this in mind  throughout this paper (and possibly in life in general).

%%%%%%%%%%%%%%%%%%%%%%%%%%%%%
\begin{comment}%%%%%%%%%%%
As subtraction by a fixed element is a homeomorphism, 
\[
	\dim_{\Top} (V_{f,t}(\{x=t\}))  = \dim_{\Top}(\{x=t\}) = k+1. 
\]

It usually isn't clear how the Carnot-Carath\'eodory distances changes under right translations in general Carnot groups.
Here however, right-translation coincides with subtraction by a fixed element, which is very simple with respect to the gauge distance.
For all $p,q \in \{x=t\}$,
\[
	d(V_{f,t}(p) , V_{f,t}(q)) = d(p - j_0^k(f) , q-j_0^k(f)) = d(p,q). 
\]
Proposition \ref{Gauge-cc} then implies that the restriction $V_{f,t}|_{\{x=t\}} :\{x=t\}\to \{x=t\}$ is biLipschitz when $\{x=t\}$ is equipped with the CC-distance. 
In particular,
\[
	\dim_{\Hau} (V_{f,t}(\{x=t\})) = \dim_{\Hau} (\{x=t\})  = \frac{(k+1)(k+2)}{2},
\]
another dimension result independent of $f$.

We mark down the observation shown in the previous example.
\begin{proposition}
	For all $t\in \R$, the restriction $V_{f,t}|_{\{x=t\}} :\{x=t\}\to \{x=t\}$ is biLipschitz, where $\{x=t\}\subset J^k(\R)$ is equipped with the CC-distance. 
\end{proposition}

\end{comment}%%%%%%%%%%%%%%%%%%%
%%%%%%%%%%%%%%%%%%%%%%%%%%%%%5

We conclude this section with the complementary example of a set: the image of a jet. 

\begin{example}\label{jet-example-sec3}
Fix $f\in C^\infty(\R) $ and $t\in \R$. 
In this example, we will consider applying our mappings to  the image $\text{im}(j^k(f))$ of the jet $j^k(f)$. 
At first glance, it might seem like things will be similar to the previous example and $V_{f,t}(\text{im}(j^k(f)))$ and $J_{f,t}(\text{im}(j^k(f)))$ will be simple to study from a dimension standpoint. 
And, in fact, $J_{f,t}(\text{im}(j^k(f)))$ is pretty easy to study.
For all $x\in \R$,
\[
	J_{f,t}(j_x^k(f)) = j_{x-t}^k(f) ,
\]
so that
\[
	J_{f,t}(\text{im}(j^k(f))) = \text{im}(j^k(f)). 
\] 

However, the study of $V_{f,t}(\text{im}(j^k(f)))$ is a bit more complicated than one would first expect.
One has 
\[
	V_{f,t}(j_x^k(f)) = j_x^k(f) \odot j_{x-t}^k(f)^{-1}, \qquad x\in \R.
\]
By Lemma \ref{right-multiplication} (to be proven in the next section), 
\[
	 (j_x^k(f) \odot j_{x-t}^k(f)^{-1} )_s = \sum_{j=s}^k \frac{(t-x)^{j-s}}{(j-s)!} (f^{(j)}(x) - f^{(j)}(x-t)) , \qquad s=0,\ldots , k.
\]
Unless $f$ is a constant function, $V_{f,t}(\text{im}(j_x^k(f)))$ will be a smooth, nonconstant curve, hence have topological dimension $1$. 
However, it isn't clear what its Hausdorff dimension is. 
\end{example}

%%%%%%%%%%%%%%%%%%%%%%%%%%%%%%%%%%%%%%%%%%%%%%%%%%%
%%%%%%%%%%%%%%%%%%%%%%%%%%%%%%%%%%%%%%%%%%%%%%%%%%%
%%%%%%%%%%%%%%%%%%%%%%%%%%%%%%%%%%%%%%%%%%%%%%%%%%%
%Section 4: Regularity of $J_{f,t}$ and $V_{f,t}$
%%%%%%%%%%%%%%%%%%%%%%%%%%%%%%%%%%%%%%%%%%%%%%%%%%%
%%%%%%%%%%%%%%%%%%%%%%%%%%%%%%%%%%%%%%%%%%%%%%%%%%%
%%%%%%%%%%%%%%%%%%%%%%%%%%%%%%%%%%%%%%%%%%%%%%%%%%%

\subsection{Regularity of $J_{f,t}$ and $V_{f,t}$}\label{sec:regularity}

Now that we have seen a couple examples and played around a little with the maps, we will prove our first result. 
We will show that the horizontal mappings and vertical mappings share some regularity amongst themselves.
As one might expect,  each of the mappings $J_{f,t} $ is locally Lipschitz and each of the $V_{f,t}$ is locally $\frac{1}{k+1}$-H\"older. 

The proof for $J_{f,t}$ is much simpler, so we will begin there.
And in fact, it should be expected that the proof will be easier for $J_{f,t}$ since each maps to a $1$-dimensional subset of $J^k(\R)$ and is given by essentially shifting the $x$-coordinate of a point by $t$. 

\begin{proposition}\label{loc-Lip-proj}
For all $t\in \R$ and $f\in C^\infty(\R)$, $J_{f,t}:J^k(\R) \to \im(j^k(f))$ is locally Lipschitz. 
\end{proposition}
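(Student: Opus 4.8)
The goal is to show that $J_{f,t}$ is locally Lipschitz, where
\[
	J_{f,t}(x,u_k,\ldots,u_0) = j_{x-t}^k(f).
\]
The plan is to reduce the problem to the biLipschitz regularity of the jet curve $j^k(f)$ established in Proposition \ref{RW-obs}, since $J_{f,t}$ is really just the composition of two simple maps: first project a point $p\in J^k(\R)$ to its first coordinate $p_x$, shift by $-t$, and then apply the path $j^k(f)$.

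First I would bound the distance between images. Fix a compact set $K\subset J^k(\R)$. For $p=(x,u_k,\ldots,u_0)$ and $q=(y,v_k,\ldots,v_0)$ in $K$, we have $J_{f,t}(p)=j_{x-t}^k(f)$ and $J_{f,t}(q)=j_{y-t}^k(f)$. Applying the upper bound in Proposition \ref{RW-obs} to the two real parameters $x-t$ and $y-t$ gives
\[
	d_{cc}(J_{f,t}(p),J_{f,t}(q)) = d_{cc}(j_{x-t}^k(f),j_{y-t}^k(f)) \le \sup_{s\in[x-t,y-t]}\left(1+(f^{(k+1)}(s))^2\right)^{1/2}|x-y|.
\]
Since $K$ is compact, the first coordinates $x$ of points in $K$ range over a bounded interval, so the parameters $x-t$ lie in a fixed compact subset of $\R$ on which the continuous function $f^{(k+1)}$ is bounded; hence the supremum is dominated by a constant $C_K$ depending only on $f$, $t$, and $K$.

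It then remains to control $|x-y|$ by the CC-distance between $p$ and $q$. This is immediate from Corollary \ref{cor-bb} together with the identity $(p^{-1}\odot q)_x = y-x$ from Proposition \ref{jet-ind-lemma}: left-invariance gives $d_{cc}(p,q)=d_{cc}(0,p^{-1}\odot q)$, and the first-coordinate entry of the Ball-Box estimate yields $|x-y|=|(p^{-1}\odot q)_x|\le C\cdot d_{cc}(p,q)$. Combining the two displays gives
\[
	d_{cc}(J_{f,t}(p),J_{f,t}(q)) \le C\cdot C_K\cdot d_{cc}(p,q),
\]
which is exactly local Lipschitz continuity on $K$. I do not anticipate a genuine obstacle here; as the authors note, this map is easy precisely because its image is one-dimensional and it acts by shifting the $x$-coordinate, so the only care needed is the standard localization argument ensuring that the $f^{(k+1)}$ supremum stays bounded, which the compactness of $K$ handles automatically.
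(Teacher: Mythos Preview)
Your proof is correct and follows essentially the same approach as the paper: both apply the upper bound of Proposition \ref{RW-obs} to $d_{cc}(j_{x-t}^k(f),j_{y-t}^k(f))$, then control $|x-y|$ via Corollary \ref{cor-bb}, and finish by noting that $f^{(k+1)}$ is bounded on compact sets. The only cosmetic difference is that you fix the compact set $K$ at the outset, whereas the paper carries the supremum through and invokes boundedness at the end.
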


\begin{proof}
Fix $t\in \R$ and $f\in C^\infty(\R)$, and let  $(x,u_k,\ldots , u_0), (y,v_k,\ldots , v_0) \in J^k(\R)$ be given. 
By Proposition \ref{RW-obs} and Corollary \ref{cor-bb},
\begin{align*}
	d_{cc}(J_{f,t}&(x,u_k,\ldots , u_0), J_{f,t}(y,v_k,\ldots , v_0)) \\
& = d_{cc} (j_{x-t}^k(f), j_{y-t}^k(f))\\
& \le \sup_{s\in [x-t,y-t]} \left( 1 + (f^{(k+1)}(s))^2\right)^{1/2} |x-y|\\
&\le C\sup_{s\in [x-t,y-t]} \left( 1 + (f^{(k+1)}(s))^2\right)^{1/2}d_{cc}((x,u_k,\ldots , u_0), (y,v_k,\ldots , v_0)),
\end{align*}
where $C$ is the constant from Corollary \ref{cor-bb}. 
As $f^{(k+1)}$ is bounded on compact sets,  $J_{f,t}$ is locally Lipschitz. 
\end{proof}

As an immediate corollary, we obtain
\begin{corollary}\label{Jft-cor}
For all Borel sets $E\subset J^k(\R)$, $t\in \R$, and $f\in C^\infty(\R)$, 
\[
	\dim_{\Hau} (J_{f,t}(E)) \le \min\{\dim_{\Hau}(E),1\}. 
\]
\end{corollary}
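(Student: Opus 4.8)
The plan is to prove the two upper bounds separately and then take the minimum. The bound $\dim_{\Hau}(J_{f,t}(E)) \le \dim_{\Hau}(E)$ will come from the general principle that locally Lipschitz maps do not increase Hausdorff dimension, applied to Proposition \ref{loc-Lip-proj}. The bound $\dim_{\Hau}(J_{f,t}(E)) \le 1$ will come from the fact that $J_{f,t}(E)$ is contained in $\im(j^k(f))$, which is a locally biLipschitz image of $\R$ and hence $1$-dimensional.

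For the first bound, I would first recall the elementary fact that a globally $L$-Lipschitz map $g$ between metric spaces satisfies $\mathcal{H}^s(g(A)) \le L^s \mathcal{H}^s(A)$, so that $\dim_{\Hau}(g(A)) \le \dim_{\Hau}(A)$ for every $A$. To upgrade from local to global Lipschitz, I would use that $J^k(\R)$ (being $\R^{k+2}$ with an equivalent metric by Proposition \ref{Gauge-cc}) is separable, hence covered by countably many balls $B_i$ on each of which $J_{f,t}$ is Lipschitz by Proposition \ref{loc-Lip-proj}. Writing $E = \bigcup_i (E\cap B_i)$ gives $J_{f,t}(E) = \bigcup_i J_{f,t}(E\cap B_i)$, and on each piece the elementary fact yields $\dim_{\Hau}(J_{f,t}(E\cap B_i)) \le \dim_{\Hau}(E\cap B_i) \le \dim_{\Hau}(E)$. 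Countable stability of Hausdorff dimension, namely $\dim_{\Hau}\bigl(\bigcup_i A_i\bigr) = \sup_i \dim_{\Hau}(A_i)$, then gives $\dim_{\Hau}(J_{f,t}(E)) \le \dim_{\Hau}(E)$.

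For the second bound, I would note that $\im(J_{f,t}) = \im(j^k(f))$, so $J_{f,t}(E) \subseteq \im(j^k(f))$. By Proposition \ref{RW-obs} the path $j^k(f)\colon \R \to J^k(\R)$ is locally biLipschitz, and the same countable-cover argument shows that a locally biLipschitz image of $\R$ has Hausdorff dimension exactly $1$; in particular $\dim_{\Hau}(\im(j^k(f))) = 1$. Monotonicity of Hausdorff dimension under inclusion then yields $\dim_{\Hau}(J_{f,t}(E)) \le 1$. Combining the two bounds gives the stated inequality.

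There is no serious obstacle here, which is why the result is billed as an immediate corollary; the only point requiring a little care is the standard reduction from the locally Lipschitz hypothesis to the global Lipschitz estimate via a countable cover together with the countable stability of Hausdorff dimension. I would state these two facts explicitly (or cite a reference such as Mattila's survey \cite{M:HDP}) rather than reprove them, keeping the argument short.
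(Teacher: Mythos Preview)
Your proposal is correct and matches the paper's intent: the paper gives no explicit proof, simply labeling the result an ``immediate corollary'' of Proposition~\ref{loc-Lip-proj}, and later (at the start of Section~\ref{sec:sets-large-proj}) confirms the second bound by the same reasoning you use, namely that $j^k(f)$ is locally biLipschitz and hence $\dim_{\Hau}(\im(j^k(f)))=1$. Your write-up just makes explicit the standard countable-cover and countable-stability facts the paper leaves implicit.
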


We see that the proof for $J_{f,t}$ being locally Lipschitz follows pretty easily from  Proposition \ref{RW-obs} and Corollary \ref{cor-bb}.
However, things get a bit more difficult when we shift to analyzing $V_{f,t}$.
$V_{f,t}$ maps to a hyperplane as opposed to a curve, and also $V_{f,t}$ involves a right-translation, which is notorious for being unwieldy.
Fortunately, the simple form of the group operation on $J^k(\R)$ will save us.
We first prove the form of $p\odot q^{-1}$ for $p,q\in J^k(\R)$, with the motivation of doing so being the particular form of $V_{f,t}$. 

\begin{lemma}\label{right-multiplication}
For all $(x,u_k,\ldots , u_0), (y,v_k,\ldots , v_0) \in J^k(\R)$, 
\[
	((x,u_k,\ldots , u_0) \odot (y,v_k,\ldots v_0)^{-1})_s = \sum_{j=s}^k \frac{(-y)^{j-s}}{(j-s)!} (u_j-v_j) ,\qquad s= 0,\ldots , k.
\]
\end{lemma}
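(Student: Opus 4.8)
The plan is to compute $p \odot q^{-1}$ directly from the definition of $\odot$, reusing the explicit inverse formula already derived inside the proof of Proposition \ref{jet-ind-lemma}. Writing $p = (x,u_k,\ldots,u_0)$ and $q = (y,v_k,\ldots,v_0)$, I would first record that $q^{-1}$ has $x$-coordinate $-y$ and remaining coordinates
\[
(q^{-1})_t = -\sum_{j=t}^k \frac{(-y)^{j-t}}{(j-t)!} v_j, \qquad t = 0,\ldots,k.
\]

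Next I would substitute $q^{-1}$ as the right-hand factor in the defining formula for $\odot$. The top coordinate is immediate, $(p\odot q^{-1})_k = u_k + (q^{-1})_k = u_k - v_k$, matching the claim at $s=k$. For $s = 0,\ldots,k-1$, the definition gives
\[
(p\odot q^{-1})_s = u_s + (q^{-1})_s + \sum_{j=s+1}^k u_j \frac{(-y)^{j-s}}{(j-s)!},
\]
since the shift coefficient in $\odot$ uses the $x$-coordinate of the right-hand factor, here equal to $-y$. The key observation---and the reason this computation is cleaner than that of Proposition \ref{jet-ind-lemma}---is that the shift term is built from the simple coordinates $u_j$ of $p$ rather than from the summed coordinates of an inverse, so no double sum and no appeal to the Binomial Theorem is required. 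I would fold the isolated $u_s$ into the shift sum as its $j=s$ term, obtaining $\sum_{j=s}^k \frac{(-y)^{j-s}}{(j-s)!} u_j$, and then combine this with the $v_j$-sum coming from $(q^{-1})_s$, whose index range $j=s,\ldots,k$ already matches. A termwise subtraction then yields $\sum_{j=s}^k \frac{(-y)^{j-s}}{(j-s)!}(u_j - v_j)$, exactly as claimed.

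There is essentially no hard step here; the only point requiring care is the summation bookkeeping, namely merging the isolated $u_s$ term into the shift sum and verifying that both sums run over $j = s,\ldots,k$ before subtracting. Once the inverse formula is in hand, the identity follows from a single line of cancellation, and in particular the case $s=k$ is consistent with the general expression since its sum has the single term $u_k - v_k$.
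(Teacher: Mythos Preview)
Your proof is correct and follows essentially the same approach as the paper: both use the explicit inverse formula for $q^{-1}$, substitute into the definition of $\odot$, and combine the two resulting sums over $j=s,\ldots,k$ termwise. Your added remarks (treating $s=k$ separately and noting why no Binomial Theorem is needed) are just expository flourishes on the same computation.
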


\begin{proof}
First,
\[
	((y,v_k,\ldots , v_0)^{-1})_s= -\sum_{j=s}^k \frac{(-y)^{j-s}}{(j-s)!} v_j, \qquad s=0,\ldots , k.
\]
We can calculate
\begin{align*}
((x,u_k,\ldots , u_0) \odot (y,v_k,\ldots v_0)^{-1})_s&= u_j - \sum_{j=s}^k \frac{(-y)^{j-s}}{(j-s)!} v_j + \sum_{j=s+1}^k  \frac{(-y)^{j-s}}{(j-s)!} u_j\\
&= \sum_{j=s}^k \frac{(-y)^{j-s}}{(j-s)!} (u_j-v_j). 
\end{align*}
\end{proof}

We can now prove that the $V_{f,t}$ are locally H\"older.

\begin{proposition}\label{Vft-Holder}
For all $t\in \R$ and $f\in C^\infty(\R)$, $V_{f,t}:J^k(\R)\to \{x=t\}$ is locally $\frac{1}{k+1}$-H\"older. 
\end{proposition}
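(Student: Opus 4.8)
The plan is to replace $d_{cc}$ by the gauge distance $d$ throughout, which is legitimate by Proposition \ref{Gauge-cc}, and to exploit the fact that both $V_{f,t}(p)$ and $V_{f,t}(q)$ lie in the plane $\{x=t\}$. For two points sharing the first coordinate $t$, equation (\ref{simple-form}) shows that the gauge distance collapses to the clean expression
\[
d(V_{f,t}(p), V_{f,t}(q)) = \sum_{s=0}^k \left| (V_{f,t}(p))_s - (V_{f,t}(q))_s \right|^{1/(k+1-s)},
\]
with no contribution from the first coordinate. So the problem reduces to estimating the raw coordinate differences $\Delta_s := (V_{f,t}(p))_s - (V_{f,t}(q))_s$. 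I claim the correct target is the \emph{Lipschitz} bound $|\Delta_s| \le C\, d(p,q)$ on compact sets; the H\"older exponent $\tfrac{1}{k+1}$ then appears only upon reassembling these into the gauge distance above, where the $s=0$ term $|\Delta_0|^{1/(k+1)}$ dominates.

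To obtain $|\Delta_s| \le C\, d(p,q)$ I would argue in two steps. First, Lemma \ref{right-multiplication} gives the explicit formula
\[
(V_{f,t}(x,u_k,\ldots,u_0))_s = \sum_{j=s}^k \frac{(-(x-t))^{j-s}}{(j-s)!}\left(u_j - f^{(j)}(x-t)\right),
\]
which, as a function of the Euclidean coordinates $(x,u_s,\ldots,u_k)$, is smooth and hence Lipschitz with respect to the Euclidean metric on any compact set, since its partial derivatives involve only $f^{(j)}, f^{(j+1)}$ evaluated on a compact range of $x-t$ together with bounded polynomial coefficients. Second, I would show the Euclidean coordinate differences are themselves controlled by $d(p,q)$: writing $p = (x,u_k,\ldots,u_0)$ and $q=(x',u_k',\ldots,u_0')$, Proposition \ref{jet-ind-lemma} gives
\[
u_j' - u_j = (p^{-1}\odot q)_j + \sum_{l=j+1}^k \frac{(x'-x)^{l-j}}{(l-j)!}\, u_l,
\]
while the gauge norm yields $|(p^{-1}\odot q)_j| \le d(p,q)^{k+1-j}$ and $|x'-x| \le d(p,q)$, so that $|u_j'-u_j| \le C\,d(p,q)$ on compact sets for $d(p,q) \le 1$. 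Composing the Euclidean-Lipschitz bound for $(V_{f,t})_s$ with this control on the coordinate differences delivers $|\Delta_s| \le C\, d(p,q)$.

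Finally I would reassemble: for $p,q$ in a fixed compact set with $d(p,q)\le 1$,
\[
d(V_{f,t}(p), V_{f,t}(q)) = \sum_{s=0}^k |\Delta_s|^{1/(k+1-s)} \le \sum_{s=0}^k (C\,d(p,q))^{1/(k+1-s)} \le C'\, d(p,q)^{1/(k+1)},
\]
since $\tfrac{1}{k+1-s} \ge \tfrac{1}{k+1}$ forces every term to be at most a constant multiple of $d(p,q)^{1/(k+1)}$ when $d(p,q)\le 1$; the remaining case $d(p,q) > 1$ on a compact set follows from boundedness of $V_{f,t}(K)$. Converting back through Proposition \ref{Gauge-cc} then gives the local $\tfrac{1}{k+1}$-H\"older bound for $d_{cc}$.

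The conceptual heart of the argument, and the step most likely to mislead, is recognizing that one should aim for a Lipschitz bound on the raw coordinate differences rather than a H\"older one: a direct appeal to the $\tfrac{1}{k+1}$-H\"older regularity of the identity map would degrade each $|\Delta_s|$ to $d(p,q)^{1/(k+1)}$ and, after the $\tfrac{1}{k+1}$-root in the $s=0$ slot, yield only the far weaker exponent $\tfrac{1}{(k+1)^2}$. The genuine obstacle is therefore the second step above, namely verifying that the Euclidean coordinate differences are Lipschitz-controlled by the gauge distance; this is in essence the ball--box estimate of Corollary \ref{cor-bb}, and it must be extracted carefully from Proposition \ref{jet-ind-lemma} with attention to the uniformity of the constants over compact sets.
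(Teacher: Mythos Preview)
Your proposal is correct and follows essentially the same route as the paper: reduce to the coordinate differences $\Delta_s$ on the plane $\{x=t\}$ via Lemma \ref{right-multiplication}, show each $|\Delta_s|$ is Lipschitz-controlled by $d_{cc}(p,q)$ on compact sets, and reassemble through the gauge norm where the $s=0$ slot produces the $\tfrac{1}{k+1}$ exponent. The only difference is packaging: where the paper carries out the Lipschitz bound on $\Delta_s$ by an explicit add-and-subtract computation (Binomial Theorem for the $(t-x)^{j-s}$ factors, Mean Value Theorem for the $f^{(j)}$ terms, and a citation of Nagel--Stein--Wainger for $|v_j-u_j|\le D_M\,d_{cc}(p,q)$), you compress this into the observation that $(V_{f,t})_s$ is smooth in Euclidean coordinates together with a direct extraction of $|u_j'-u_j|\lesssim d(p,q)$ from Proposition \ref{jet-ind-lemma}---which is in fact a self-contained substitute for the Nagel--Stein--Wainger citation.
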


\begin{proof}
Let $t\in \R$ and $f\in C^\infty(\R)$ be given. 
%Fix an box $[-N,N]^{k+2} \in J^k(\R)$.
It suffices to prove that for all $M>1$,  there exists a constant $C_M$ such that 
\[
	d_{cc}(V_{f,t}(x,u_k,\ldots , u_0) , V_{f,t}(y,v_k,\ldots , v_0)) \le C_Md_{cc}((x,u_k,\ldots ,u_0), (y,v_k,\ldots ,v_0))^{1/(k+1)}
\]
for all $(x,u_k,\ldots , u_0) , (y,v_k,\ldots , v_0) \in [-M,M]^{k+2} \subset J^k(\R)$. 

Fix $M>1$ and $(x,u_k,\ldots , u_0) , (y,v_k,\ldots  ,v_0) \in [-M,M]^{k+2}. $ 
We have
\begin{equation}
\begin{aligned}
	d_{cc}(V_{f,t}&(x,u_k,\ldots , u_0) , V_{f,t}(y,v_k,\ldots , v_0)) \label{Vft-bound} \\
& = d_{cc}(0, V_{f,t}(x,u_k,\ldots , u_0)^{-1} \odot V_{f,t}(y,v_k,\ldots ,v_0) )\\
& \le C\biggl(  | (V_{f,t}(x,u_k,\ldots  , u_0))_x- (V_{f,t}(y,v_k,\ldots , v_0))_x| \\
&\qquad\qquad+ \sum_{s=0}^k | (V_{f,t}(x,u_k,\ldots  , u_0)^{-1}\odot V_{f,t}(y,v_k,\ldots , v_0))_s| ^{1/(k+1-s)}
\biggr) ,
\end{aligned}
\end{equation}
where $C$ is the constant from Proposition \ref{Gauge-cc}. 
Thus, it suffices to bound the coordinates of $V_{f,t}(x,u_k,\ldots  , u_0)^{-1}\odot V_{f,t}(y,v_k,\ldots , v_0)$. 

By definition of $V_{f,t}$, 
\[
	(V_{f,t} (x,u_k,\ldots , u_0) )_x - (V_{f,t}(y,v_k,\ldots , v_0))_x = t - t=0.
\]

By Lemma \ref{right-multiplication},
\[
	  ((x,u_k,\ldots , u_0) \odot j_{x-t}^k(f)^{-1} )_s = \sum_{j=s}^k \frac{(t-x)^{j-s}}{(j-s)!} (u_j -f^{(j)}(x-t))
\]
and
\[
	((y,v_k,\ldots , v_0) \odot j_{y-t}^k(f)^{-1})_s = \sum_{j=s}^k \frac{(t-y)^{j-s}}{(j-s)!} (v_j -f^{(j)}(y-t))
\]
for $s=0 ,\ldots , k$. 
By Proposition \ref{jet-ind-lemma}, this implies
\begin{align*}
	\biggl(((x,u_k,\ldots , u_0) &\odot j_{x-t}^k(f)^{-1} )^{-1}\odot ((y,v_k,\ldots , v_0) \odot j_{x-t}^k(f)^{-1})\biggr)_s\\
& = \sum_{j=s}^k \frac{(t-y)^{j-s}}{(j-s)!} \left( v_j - f^{(j)} (y-t)\right) - \sum_{j=s}^k \frac{(t-x)^{j-s}}{(j-s)!} \left( u_j -f^{(j)}(x-t)\right) .
\end{align*}
(Note that the $x$-coordinates of $V_{f,t}(x,u_k,\ldots , u_0)$ and $V_{f,t}(y,v_k,\ldots,u_0)$ agree, hence most terms drop out.)
Adding and subtracting terms, the last expression can be rewritten as
\begin{equation}
\begin{aligned}
	&\sum_{j=s}^k \frac{(t-y)^{j-s}}{(j-s)!} \left( v_j - f^{(j)} (y-t)\right) -\sum_{j=s}^k \frac{(t-x)^{j-s}}{(j-s)!} \left( v_j - f^{(j)} (y-t)\right) \label{large-sum}\\
&\qquad +\sum_{j=s}^k \frac{(t-x)^{j-s}}{(j-s)!} \left( v_j - f^{(j)} (y-t)\right) - \sum_{j=s}^k \frac{(t-x)^{j-s}}{(j-s)!} \left( u_j -f^{(j)}(x-t)\right) .
\end{aligned}
\end{equation}
By the Binomial Theorem, 
\[
	|a^n -b^n |\le  n(M+|t|)^{n-1}|a-b|
\]
for $-M-t\le a,b\le M-t, \ n\in \N$, 
which implies
\[
	|(t-y)^{j-s}- (t-x)^{j-s}| \le (j-s) (M+|t|)^{j-s-1} |x-y|, \qquad j=s+1,\ldots , k.
\]
If we define 
\[
	A:= \max \{|f^{(j)}(a)|: a\in [-M-t,M-t],\  j= s+1,\ldots, k+1\},
\] 
then
\begin{equation}
\begin{aligned}
	&\sum_{j=s}^k \frac{(t-y)^{j-s}}{(j-s)!} \left( v_j - f^{(j)} (y-t)\right) -\sum_{j=s}^k \frac{(t-x)^{j-s}}{(j-s)!} \left( v_j - f^{(j)} (y-t)\right)\label{first-summand}\\
&\qquad \le \sum_{j=s+1}^k \frac{(j-s) (M+|t|)^{j-s-1} |x-y|}{(j-s)!}\cdot (M+ A)\\
& \qquad  \le (k-s)^2 (M+|t|)^{k-s-1}(M+A) \cdot |x-y|\\
&\qquad \le k^2 (M+|t|)^{k-1}(M+A)  Dd_{cc}((x,u_k,\ldots , u_0), (y,v_k,\ldots , v_0)),
\end{aligned}
\end{equation}
where $D$ is the constant from Corollary \ref{cor-bb}.
This bounds the first expression of (\ref{large-sum}). 

For the second expression, by the Mean Value Theorem,
\begin{equation}\label{MVT-bound}
	|f^{(j)}(y-t)-f^{(j)}(x-t)| \le A |x-y|\le AD d_{cc}((x,u_k,\ldots , u_0), (y,v_k,\ldots , v_0))
\end{equation}
for $j=s,\ldots , k$. 
Moreover, by the theorem of Nagel-Stein-Wainger \cite[Proposition 1.1]{NSW:BAM}, the identity map $\text{id}:J^k(\R)\to \R^{k+2}$ is locally Lipschitz. 
In particular, there exists a constant $D_M$ such that for all $p,q\in [-M,M]^{k+2}\subset J^k(\R)$,
\[
	|\text{id}(p) - \text{id}(q)| \le D_M d_{cc} (p,q).
\]
This implies 
\begin{align*}
	|v_j -u_j|& \le |\text{id}(x,u_k,\ldots ,u_0) - \text{id}(y,v_k,\ldots , v_0)| \\
&\le D_M d_{cc} ((x,u_k,\ldots , u_0) , (y,v_k,\ldots , v_0)). 
\end{align*}
We can combine this with (\ref{MVT-bound}) to obtain
\[
	|(v_j -f^{(j)}(y-t)) - (u_j -f^{(j)} (x-t))| \le (AD+D_M) d_{cc} ((x,u_k,\ldots , u_0) , (y,v_k,\ldots , v_0)). 
\]
By (\ref{large-sum}) and (\ref{first-summand}), 
\begin{align*}
\biggl(((x,u_k,\ldots , u_0) &\odot j_{x-t}^k(f)^{-1} )^{-1}\odot ((y,v_k,\ldots , v_0) \odot j_{x-t}^k(f)^{-1})\biggr)_s \\
& \le k^2 (M+|t|)^{k-1}(M+A)D d_{cc} ((x,u_k,\ldots , u_0) , (y,v_k,\ldots , v_0))\\
&\qquad + \sum_{j=s}^k \frac{(|t|+M)^{j-s}}{(j-s)!}(AD+D_M) d_{cc} ((x,u_k,\ldots , u_0) , (y,v_k,\ldots , v_0))\\
& \le \widetilde{C_M }d_{cc} ((x,u_k,\ldots , u_0) , (y,v_k,\ldots , v_0)),
\end{align*} 
where
\[
	\widetilde{C_M} : = k^2 (M+|t|)^{k-1}(M+A)D+ (k+1)(|t|+M)^{k} (AD+D_M)+1 . 
\]
(We included the extra term of $1$ at the end just to be secure later when we consider roots of $\widetilde{C_M}$.)
By (\ref{Vft-bound}), we have
\begin{align*}
	d_{cc}(V_{f,t}&(x,u_k,\ldots , u_0) , V_{f,t}(y,v_k,\ldots , v_0))\\
&\le C\sum_{s=0}^k (\widetilde{C_M }d_{cc} ((x,u_k,\ldots , u_0) , (y,v_k,\ldots , v_0)))^{1/(k+1-s)}\\
&\le C(k+1) \widetilde{C_M} \diam([-M,M]^{k+2})\cdot d_{cc} ((x,u_k,\ldots , u_0) , (y,v_k,\ldots , v_0)))^{1/(k+1)}.
\end{align*}
For the last inequality, we used that 
\begin{align*}
&	d_{cc} ((x,u_k,\ldots , u_0) , (y,v_k,\ldots , v_0)))^{1/(k+1-s)} \\
&\qquad\le \diam([-M,M]^{k+2})\cdot d_{cc} ((x,u_k,\ldots , u_0) , (y,v_k,\ldots , v_0)))^{1/(k+1)}
\end{align*}
for all $s=0,\ldots , k$. 
This proves that $V_{f,t}$ is locally $\frac{1}{k+1}$-H\"older as desired.
\end{proof}

As a consequence, $V_{f,t}$ cannot increase Hausdorff dimension by more than a factor of $k+1$.
\begin{corollary}\label{Vft-cor}
	For all $t\in \R$, $f\in C^\infty(\R)$, and Borel sets $E\subset J^k(\R)$, 
\[
	\dim_{\Hau} (V_{f,t}(E)) \le \min \left\{(k+1)\dim_{\Hau}(E), \frac{(k+1)(k+2)}{2}\right\}.
\]
\end{corollary}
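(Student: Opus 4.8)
The plan is to read this off from the local $\frac{1}{k+1}$-H\"older regularity of $V_{f,t}$ established in Proposition \ref{Vft-Holder}, together with two standard facts: that a (locally) $\alpha$-H\"older map raises Hausdorff dimension by at most the factor $1/\alpha$, and that Hausdorff dimension is monotone under inclusion and countably stable. The bound is a genuine corollary, so no new geometry is needed; the only point to handle carefully is the passage from local to global H\"older behaviour.

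First I would record the elementary H\"older dimension estimate. Suppose $g$ satisfies $d_{cc}(g(p),g(q)) \le C\, d_{cc}(p,q)^{1/(k+1)}$ on a set $A$. Given any cover $\{U_i\}$ of $A$ by sets of small diameter, the images satisfy $\diam g(U_i) \le C (\diam U_i)^{1/(k+1)}$, so comparing $\sum_i (\diam g(U_i))^{(k+1)s}$ with $\sum_i (\diam U_i)^{s}$ yields $\mathcal{H}^{(k+1)s}(g(A)) \le C^{(k+1)s}\,\mathcal{H}^{s}(A)$. Choosing any $s > \dim_{\Hau}(A)$ makes the right side vanish, hence $\dim_{\Hau}(g(A)) \le (k+1)s$; letting $s \searrow \dim_{\Hau}(A)$ gives $\dim_{\Hau}(g(A)) \le (k+1)\dim_{\Hau}(A)$.

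Next I would remove the word ``local''. Writing $J^k(\R) = \bigcup_{M\in\N} [-M,M]^{k+2}$, Proposition \ref{Vft-Holder} gives on each cube a finite H\"older constant $C_M$, so the previous paragraph applies to $V_{f,t}$ restricted to $[-M,M]^{k+2}$ and yields $\dim_{\Hau}(V_{f,t}(E \cap [-M,M]^{k+2})) \le (k+1)\dim_{\Hau}(E)$ for every $M$. By countable stability, $\dim_{\Hau}(V_{f,t}(E)) = \sup_M \dim_{\Hau}(V_{f,t}(E \cap [-M,M]^{k+2})) \le (k+1)\dim_{\Hau}(E)$, which is the first term in the minimum. (Note that no measurability of $V_{f,t}(E)$ is needed for this upper bound, and that we may compute with either $d_{cc}$ or $d$ by Proposition \ref{Gauge-cc}.)

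For the second term I would simply invoke the containment $V_{f,t}(E) \subset \im(V_{f,t}) = \{x=t\}$ together with monotonicity of Hausdorff dimension, giving $\dim_{\Hau}(V_{f,t}(E)) \le \dim_{\Hau}(\{x=t\}) = \frac{(k+1)(k+2)}{2}$. Taking the minimum of the two bounds completes the proof. As anticipated, there is no real obstacle; the mildly delicate step is the local-to-global reduction, which is dispatched by the countable decomposition above.
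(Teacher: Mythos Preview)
Your proposal is correct and follows exactly the intended route: the paper states this as an immediate corollary of Proposition \ref{Vft-Holder} with no further argument, and your write-up supplies precisely the standard details (H\"older dimension estimate, local-to-global via a countable exhaustion, and monotonicity under the inclusion $V_{f,t}(E)\subset\{x=t\}$) that the paper leaves implicit.
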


%%%%%%%%%%%%%%%%%%%%%%%%%%%%%%%%%%%%%%%%%%%%%%%%%%%
%%%%%%%%%%%%%%%%%%%%%%%%%%%%%%%%%%%%%%%%%%%%%%%%%%%
%%%%%%%%%%%%%%%%%%%%%%%%%%%%%%%%%%%%%%%%%%%%%%%%%%%
%Section: Possible pairs for Hausdorff dimension of images
%%%%%%%%%%%%%%%%%%%%%%%%%%%%%%%%%%%%%%%%%%%%%%%%%%%
%%%%%%%%%%%%%%%%%%%%%%%%%%%%%%%%%%%%%%%%%%%%%%%%%%%
%%%%%%%%%%%%%%%%%%%%%%%%%%%%%%%%%%%%%%%%%%%%%%%%%%%

\section{Possible pairs for Hausdorff dimensions of images}

For all $f\in C^\infty (\R)$, $t\in \R$, and Borel sets $E$,
\[
	0 \le \dim_{\Hau}(J_{f,t}(E)) \le \dim_{\Hau}(\im(j^k(f))) = 1
\]
and
\[
	0 \le \dim_{\Hau}(V_{f,t}(E)) \le \dim_{\Hau}(\{x=t\}) =  \frac{(k+1)(k+2)}{2}.
\]
In this section, we will prove that for all $f\in C^\infty(\R)$, $t\in \R$ and pairs $(\alpha,\beta) \in [0,1]\times \left[ 0 , \frac{(k+1)(k+2)}{2}\right],$ there exists a Borel set $E\subset \R$ such that 
\[
	\dim_{\Hau}(J_{f,t}(E)) = \alpha \quad\text{and}\quad \dim_{\Hau}(V_{f,t}(E)) = \beta.
\]
(Theorem \ref{satisfying-theorem}). 
We will first prove the result for $\beta =0$ (Proposition \ref{large-Jft}) and then prove it for $\alpha=0$ (Corollary \ref{vertical-dimension-theorem}).
The desired set will be given by their union.

%%%%%%%%%%%%%%%%%%%%%%%%%%%%%%%%%%%%%%%%%%%%%%%%%%%
%%%%%%%%%%%%%%%%%%%%%%%%%%%%%%%%%%%%%%%%%%%%%%%%%%%
%Subsection 5: Sets that are $J_{f,t}$-large and $V_{f,t}$-null
%%%%%%%%%%%%%%%%%%%%%%%%%%%%%%%%%%%%%%%%%%%%%%%%%%%
%%%%%%%%%%%%%%%%%%%%%%%%%%%%%%%%%%%%%%%%%%%%%%%%%%%

\subsection{Sets that are $J_{f,t}$-large and $V_{f,t}$-null}\label{sec:sets-large-proj}

In this section, $t\in \R$ and $f\in C^\infty(\R)$ will be fixed throughout.
Next section, we will construct sets  independent of $f$ that are large, null after being mapped by $V_{f,t}$, $J_{f,t}$, respectively.  

As $j^k(f):\R\to J^k(\R)$ is locally biLipschitz, it preserves Hausdorff dimension.
Hence, for all Borel sets $E\subset J^k(\R)$,
\[
	0 \le \dim_{\Hau}(J_{f,t}(E)) \le \dim_{\Hau} (\im (j^k(f))) = \dim_{\Hau} (\R) = 1. 
\]
Moreover, by the argument in Example \ref{jet-example-sec3}, if $E_\alpha \in \R$ has Hausdorff dimension $\alpha\in [0,1]$ (see Theorem \ref{Cantor-exist} below), then
\[
	 \dim_{\Hau}(J_{f,t}(j^k(f)(E_\alpha))) = \dim_{\Hau} (j^k(f)(E_\alpha)) = \dim_{\Hau}(E_\alpha) =\alpha.
\]
This shows that the full range of values for $\dim_{\Hau}(J_{f,t}(E))$ is possible as $E$ varies.
We can actually  prove an even stronger statement.

We first recall a well-known result about the existence of Cantor-like sets of every dimension in $\R$.
\begin{theorem}\label{Cantor-exist}(see for instance \cite[Section 4.10]{M:GOS})
For all $\alpha \in [0,1]$, there exists a compact set $E_\alpha \subset \R$ with $\dim_{\Hau}(E_\alpha) =\alpha$. 
\end{theorem}

We can now prove our main result of the section.
\begin{proposition}\label{large-Jft}
Fix $t\in \R$, $f\in C^\infty(\R)$, and $0\le \alpha\le 1$. 
Let $E_\alpha\subset\R$ be a compact set with $\dim_{\Hau}(E_\alpha) = \alpha$.
For all $p \in \{x=t\}$, the compact set 
\[
p\odot j^k(f)(E_\alpha):= \{p\odot j_x^k(f) :x\in E_\alpha\}\subset J^k(\R)
\]
 satisfies
\[
	 \dim_{\Hau}J_{f,t}(p\odot j^k(f)(E_\alpha) ) =\alpha
\qquad\text{and}\qquad 
\dim_{\Hau} V_{f,t}(p\odot j^k(f)(E_\alpha)  )= 0. 
\]
\end{proposition}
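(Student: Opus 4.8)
The plan is to exploit the uniqueness of the splitting $q = V_{f,t}(q)\odot J_{f,t}(q)$ that was established just before the definition of the two mappings, rather than to recompute either map from scratch. First I would fix $p=(t,a_k,\ldots,a_0)\in\{x=t\}$ and observe that for each $x\in\R$ the point $p\odot j_x^k(f)$ is \emph{already} presented in split form: its left factor $p$ lies in $\{x=t\}$ and its right factor $j_x^k(f)$ lies in $\im(j^k(f))$. By the uniqueness of such a decomposition, this forces
\[
	V_{f,t}(p\odot j_x^k(f)) = p \qquad\text{and}\qquad J_{f,t}(p\odot j_x^k(f)) = j_x^k(f)
\]
for every $x$. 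As a cross-check one can verify directly from the group law that $(p\odot j_x^k(f))_x = t+x$, so that $J_{f,t}$ shifts the first coordinate back to $x$ and $V_{f,t}$ right-multiplies by $j_x^k(f)^{-1}$, cancelling the horizontal factor; but this explicit computation is not strictly needed once uniqueness is invoked.

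Granting the two identities above, both dimension statements fall out immediately. For the vertical map, the set collapses to a single point,
\[
	V_{f,t}(p\odot j^k(f)(E_\alpha)) = \{p\},
\]
so $\dim_{\Hau} V_{f,t}(p\odot j^k(f)(E_\alpha)) = 0$. For the horizontal map,
\[
	J_{f,t}(p\odot j^k(f)(E_\alpha)) = \{j_x^k(f):x\in E_\alpha\} = j^k(f)(E_\alpha).
\]
Since $E_\alpha$ is compact it is contained in some interval $[-M,M]$, on which Proposition \ref{RW-obs} shows $j^k(f)$ is biLipschitz, with constants controlled by $\sup_{[-M,M]}(1+(f^{(k+1)})^2)^{1/2}$. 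As biLipschitz maps preserve Hausdorff dimension, I would conclude
\[
	\dim_{\Hau} J_{f,t}(p\odot j^k(f)(E_\alpha)) = \dim_{\Hau}(j^k(f)(E_\alpha)) = \dim_{\Hau}(E_\alpha) = \alpha.
\]

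There is essentially no analytic obstacle here: the argument is entirely structural, and the only thing to get right is the bookkeeping showing that $p\odot j^k(f)(E_\alpha)$ is a single fiber of $V_{f,t}$, equivalently a left coset of $\im(j^k(f))$ meeting $\{x=t\}$ in the point $p$. The mild subtlety, and the step I would state most carefully, is the passage from the \emph{local} biLipschitz property of $j^k(f)$ to an honest dimension-preserving statement on $E_\alpha$; compactness of $E_\alpha$ resolves this cleanly by confining everything to a fixed compact interval where the biLipschitz constants are uniform.
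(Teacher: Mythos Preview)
Your proof is correct and follows essentially the same route as the paper: both arguments rest on the identities $J_{f,t}(p\odot j_x^k(f)) = j_x^k(f)$ and $V_{f,t}(p\odot j_x^k(f)) = p$, then invoke Proposition~\ref{RW-obs} for the horizontal dimension and observe that the vertical image is a single point. You are in fact slightly more explicit than the paper in justifying these identities via uniqueness of the splitting and in noting that compactness of $E_\alpha$ is what makes the locally biLipschitz property of $j^k(f)$ sufficient.
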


\begin{proof}
The proof is quite simple compared to what we have seen thus far.

Let $t,$ $ f, $ $E_\alpha,$ and $p$  be as in the statement. 
For all $x\in E_\alpha$, 
\[
	J_{f,t} (p\odot j_x^k(f)) = j_x^k(f),
\]
which implies
\[
	J_{f,t}(p\odot j^k(f)(E_\alpha) ) = \{j_x^k(f): x\in E_\alpha\}.
\]
By Proposition \ref{RW-obs}, 
\[
	\dim_{\Hau} J_{f,t}(p\odot j^k(f)(E_\alpha) )  = \dim_{\Hau} j^k(f)(E_\alpha) = \dim_{\Hau}(E_\alpha) =\alpha. 
\]

On the other hand, 
\[
	V_{f,t}(p\odot j_x^k(f)) = p
\]
for all $x\in E_\alpha$. 
This implies 
\[
	V_{f,t} (p\odot j^k(f)(E_\alpha) )  = \{p\}, 
\]
hence $\dim_{\Hau}V_{f,t} (p\odot j^k(f)(E_\alpha) )  = 0 .$
\end{proof}

This proposition begs a couple of questions.
\begin{question}
Fix $t\in \R$ and $\alpha \in (0,1]$. Does there exist a set $F_\alpha \subset J^k(\R) $ such that  
\[
	 \dim_{\Hau}J_{f,t}(F_\alpha ) =\alpha
\qquad\text{and}\qquad 
\dim_{\Hau} V_{f,t}(F_\alpha  )= 0
\]
for all $f\in C^\infty(\R)$?
\end{question} 

\begin{question}
Fix $f\in  C^\infty(\R)$ and $\alpha \in (0,1]$. Does there exist a set $G_\alpha \subset J^k(\R) $ such that  
\[
	 \dim_{\Hau}J_{f,t}(G_\alpha ) =\alpha
\qquad\text{and}\qquad 
\dim_{\Hau} V_{f,t}(G_\alpha  )= 0
\]
for all $t\in \R$?
\end{question}

%%%%%%%%%%%%%%%%%%%%%%%%%%%%%%%%%%%%%%%%%%%%%%%%%%%
%%%%%%%%%%%%%%%%%%%%%%%%%%%%%%%%%%%%%%%%%%%%%%%%%%%
%Subsection 6: Sets that are $V_{f,t}$-large and $J_{f,t}$-null
%%%%%%%%%%%%%%%%%%%%%%%%%%%%%%%%%%%%%%%%%%%%%%%%%%%
%%%%%%%%%%%%%%%%%%%%%%%%%%%%%%%%%%%%%%%%%%%%%%%%%%%

\subsection{Sets that are $J_{f,t}$-null and $V_{f,t}$-large}\label{sec:sets-null-proj}

We now consider the complementary problem to the one considered in the previous section.
Fix $t\in \R$ and $f\in C^\infty(\R)$. 
We will show in this section that
$
	\dim_{\Hau} (\{x=t\})  = \frac{(k+1)(k+2)}{2}. 
$
Assuming this for now, 
\[
	0\le \dim_{\Hau}(V_{f,t} (E)) \le \dim_{\Hau} (\{x=t\} )= \frac{(k+1)(k+2)}{2}
\]
for all $E\subset J^k(\R)$. 
We will show that the full range of values for $\dim_{\Hau}(V_{f,t}(E))$ is possible.
In fact, the sets $E$ will be null sets when mapped by $J_{f,t}$  to $\im (j^k(f))$. 
Moreover, the sets $E$ we construct in this section will be independent of $f$, unlike what was constructed in the previous section. 
We will prove the following two results.

\begin{theorem}\label{vertical-dimension-theorem}
	Fix $t\in \R$ and $0\le \beta \le \frac{(k+1)(k+2)}{2}$. 
There exists a compact set $E_{t,\beta} \subset \{x=t\}$ such that 
\[
	\dim_{\Hau} E_{t,\beta} = \beta.
\]
\end{theorem}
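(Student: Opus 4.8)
The plan is to reduce everything to the restriction of the metric to the hyperplane $\{x=t\}$, where it has a transparent snowflake-product structure, and then build $E_{t,\beta}$ as a product of Cantor sets, one in each coordinate. For two points $p=(t,u_k,\ldots,u_0)$ and $q=(t,v_k,\ldots,v_0)$ with the same first coordinate, Proposition \ref{jet-ind-lemma} gives $(p^{-1}\odot q)_s = v_s-u_s$, so the gauge distance collapses to
\[
	d(p,q) = \sum_{j=0}^k |u_j - v_j|^{1/(k+1-j)},
\]
which, being a finite sum of nonnegative terms, is comparable to $\max_{0\le j\le k}|u_j-v_j|^{1/(k+1-j)}$, and by Proposition \ref{Gauge-cc} is comparable to $d_{cc}$ on $\{x=t\}$. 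In particular a $d_{cc}$-ball $B_{cc}(p,r)$ centered at a point of $\{x=t\}$ is comparable to the box $\prod_{j=0}^k\{|w_j-u_j|\le r^{k+1-j}\}$. This box description of balls is the only structural input I need.

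Summing the coordinatewise contributions,
\[
	\sum_{j=0}^k (k+1-j) = \sum_{m=1}^{k+1} m = \frac{(k+1)(k+2)}{2},
\]
consistent with the claimed dimension of the whole plane. Given $0\le\beta\le\frac{(k+1)(k+2)}{2}$, I would write $\beta=\sum_{j=0}^k\beta_j$ with each $\beta_j\in[0,k+1-j]$ (fill the coordinates greedily) and set $\gamma_j:=\beta_j/(k+1-j)\in[0,1]$. In the coordinate $u_j$ I would place a compact $C_j\subset\R$ of Euclidean Hausdorff dimension $\gamma_j$, taking the $C_j$ to be self-similar Ahlfors $\gamma_j$-regular Cantor sets (with $C_j$ a single point when $\gamma_j=0$ and $C_j=[0,1]$ when $\gamma_j=1$), which are exactly the sets underlying Theorem \ref{Cantor-exist}. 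Then I would define the compact set
\[
	E_{t,\beta} := \{t\}\times C_k\times C_{k-1}\times\cdots\times C_0\subset\{x=t\}.
\]

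Let $\mu_j$ be the $\gamma_j$-regular measure on $C_j$ and $\mu=\prod_j\mu_j$. Using the box description of balls,
\[
	\mu(B_{cc}(p,r)) \approx \prod_{j=0}^k \mu_j\bigl(\{|w_j-u_j|\le r^{k+1-j}\}\bigr) \approx \prod_{j=0}^k (r^{k+1-j})^{\gamma_j} = r^{\sum_j\beta_j} = r^\beta
\]
for $p\in E_{t,\beta}$ and small $r$. The lower bound $\mu(B_{cc}(p,r))\gtrsim r^\beta$ feeds the mass distribution principle to give $\dim_{\Hau}E_{t,\beta}\ge\beta$, while the upper bound $\mu(B_{cc}(p,r))\lesssim r^\beta$ together with an efficient covering of $E_{t,\beta}$ by comparable boxes gives $\dim_{\Hau}E_{t,\beta}\le\beta$; equivalently, $E_{t,\beta}$ is Ahlfors $\beta$-regular, since snowflaking by $1/(k+1-j)$ multiplies the regularity exponent of $C_j$ by $k+1-j$ and products of Ahlfors-regular sets are Ahlfors-regular with summed exponents.

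The one point requiring care, and the reason for insisting on Ahlfors-regular factors, is the product dimension formula: in general only $\dim_{\Hau}(A\times B)\ge\dim_{\Hau}A+\dim_{\Hau}B$ is automatic, and the matching upper bound requires the factors to have coinciding Hausdorff and box dimensions. With self-similar regular factors both bounds fall out of the single estimate $\mu(B_{cc}(p,r))\approx r^\beta$, so the main obstacle is merely the bookkeeping of how the snowflake exponents $1/(k+1-j)$ rescale the regularity exponents, together with checking the degenerate coordinates $\gamma_j\in\{0,1\}$ separately (a point is Ahlfors $0$-regular and $[0,1]$ is Ahlfors $1$-regular).
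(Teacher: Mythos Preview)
Your proof is correct. With greedy filling your set $E_{t,\beta}$ is literally the same as the paper's: the paper also chooses the unique index $j$ with $h_{j-1}<\beta\le h_j$ (where $h_j=\frac{j(j+1)}{2}$), takes a single compact $\widetilde E_\beta\subset\R$ of dimension $(\beta-h_{j-1})/j$, and sets $E_{t,\beta}=\{t\}\times[0,1]^{j-1}\times\widetilde E_\beta\times\{0\}^{k-j+1}$, i.e.\ full intervals in the low-weight coordinates, one Cantor factor, and points thereafter.

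Where you diverge is in the \emph{verification} of the dimension. The paper proves an exact product formula $\dim_{\Hau}([0,1]_\alpha\times A)=\alpha+\dim_{\Hau}A$ for an \emph{arbitrary} compact $A$ (Lemma preceding Proposition \ref{snowflake-dimension}): the lower bound via Frostman/mass distribution, the upper bound by an explicit covering of $[0,1]_\alpha\times A$ by roughly $r_i^{-\alpha}$ boxes over each piece $E_i$ of a near-optimal cover of $A$; induction then gives Proposition \ref{snowflake-dimension}. You instead insist that each factor be Ahlfors regular and read off both inequalities at once from $\mu(B_{cc}(p,r))\approx r^\beta$ for the product measure. Your route is cleaner and more flexible (any decomposition $\beta=\sum_j\beta_j$ works, not just the greedy one), at the cost of requiring the Cantor factors to be regular rather than merely compact of prescribed dimension; the paper's route avoids that hypothesis but needs the explicit covering argument for the upper bound. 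Both are standard and either is adequate here.
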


\begin{corollary}\label{large-Vft-cor}
	Fix $t\in \R$ and $0\le \beta \le \frac{(k+1)(k+2)}{2}$. 
There exists a compact set $E_{t,\beta} \subset \{x=t\}$ such that for all $f\in C^\infty(\R)$,
\[
	\dim_{\Hau} V_{f,t}(E_{t,\beta})  = \beta \quad\text{and}\quad \dim_{\Hau} J_{f,t}(E_{t,\beta}) = 0. 
\]
\end{corollary}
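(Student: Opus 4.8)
The plan is to show that the single set $E_{t,\beta}\subset\{x=t\}$ furnished by Theorem \ref{vertical-dimension-theorem} already satisfies both conclusions simultaneously, for \emph{every} $f$ at once. Since that set is built inside the fixed plane $\{x=t\}$ with no reference to $f$, the entire task reduces to understanding how $J_{f,t}$ and $V_{f,t}$ act on subsets of $\{x=t\}$, and to checking that these actions control Hausdorff dimension in a way that is uniform in $f$.

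First I would dispose of the horizontal mapping, which is immediate. Every point of $E_{t,\beta}$ has first coordinate $t$, so by the computation in Example \ref{plane-example} we have $J_{f,t}(p)=j_{x-t}^k(f)=j_0^k(f)$ for all $p\in\{x=t\}$. Hence $J_{f,t}(E_{t,\beta})=\{j_0^k(f)\}$ is a single point, and $\dim_{\Hau}J_{f,t}(E_{t,\beta})=0$ for every $f\in C^\infty(\R)$.

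For the vertical mapping I would invoke the structural fact behind Proposition \ref{bilip-Vft}. On $\{x=t\}$ the map $V_{f,t}$ reduces to subtraction of the fixed vector $j_0^k(f)$ (by Example \ref{plane-example} together with Lemma \ref{right-multiplication}), and this subtraction is an exact gauge-isometry: for $p,q\in\{x=t\}$ the nontrivial coordinates of $p^{-1}\odot q$ are the coordinatewise differences of $p$ and $q$ by Proposition \ref{jet-ind-lemma} (equal first coordinates), and these are unchanged when both $p$ and $q$ are translated by the same vector, so $d(V_{f,t}(p),V_{f,t}(q))=d(p,q)$. By Proposition \ref{Gauge-cc} this forces $V_{f,t}|_{\{x=t\}}$ to be biLipschitz for the CC-distance, and biLipschitz maps preserve Hausdorff dimension; therefore $\dim_{\Hau}V_{f,t}(E_{t,\beta})=\dim_{\Hau}E_{t,\beta}=\beta$.

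The point I would stress is that there is essentially no obstacle once Theorem \ref{vertical-dimension-theorem} is in hand; the entire content of the corollary is the uniformity in $f$. This uniformity holds precisely because the gauge-isometry above has distortion governed only by the $f$-independent comparison constant of Proposition \ref{Gauge-cc}, and because the horizontal image is always the single point $j_0^k(f)$. Consequently the same set $E_{t,\beta}$ works for all $f$ at once, which is exactly what distinguishes this corollary from the theorem it follows.
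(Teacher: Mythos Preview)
Your proposal is correct and follows essentially the same approach as the paper: take the set $E_{t,\beta}$ from Theorem \ref{vertical-dimension-theorem}, observe that $J_{f,t}(\{x=t\})=\{j_0^k(f)\}$ so the horizontal image is a point, and use that $V_{f,t}|_{\{x=t\}}$ is biLipschitz (Proposition \ref{bilip-Vft}) to preserve the Hausdorff dimension. The paper simply cites Proposition \ref{bilip-Vft} directly, whereas you recapitulate its proof (the gauge-isometry argument), but the logic is identical.
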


Note that the corollary follows easily from the theorem. 
Indeed, for all $f\in C^\infty(\R)$, the restriction  $V_{f,t}|_{\{x=t\}}  : \{x=t\} \to \{x=t\}$ is biLipschitz (Proposition \ref{bilip-Vft}) and $J_{f,t}(\{x=t\}) =\{ j_0^k(f)\}$.

%The trouble of securing the set in Theorem \ref{vertical-dimension-theorem} will be validating our intuition for what the set should be.
Given a set $E\subset \R$, our intuition based on the form of the dilations on $J^k(\R)$ says
\begin{equation}\label{sum-dimensions}
	\dim_{\Hau} (\{t\}\times [0,1]^{j-1}\times E \times \{0\}^{k+1-j} ) = \frac{j(j-1)}{2} + j \dim_{\Hau}  (E), \qquad j \le k+1.
\end{equation}
Then it's simple algebra to figure out what we should take $j$ and $E$ to be to construct the set $E_{t,\beta}$ in Theorem \ref{vertical-dimension-theorem}.
The main purpose of this section is to validate our intution and prove (\ref{sum-dimensions}) holds true.  

To prove Theorem \ref{vertical-dimension-theorem}, we first state a variant of the  Mass Distribution Principle. 
This result is well-known, so we will not prove it here and refer the reader to \cite[Lemma 1.2.8]{CP:FIP}.
\begin{lemma}\label{MDP}{(Mass Distribution Principle)}
Fix $s>0$ and a metric space $A$.
Suppose there exists a Borel measure $\mu$ on $A$ and constants $\delta,C>0$ such that 
\[
	\mu(B(x,r)) \le Cr^s
\]
for all $x \in A$ and $r<\delta$.
Then $\mathcal{H}^s(A)>0$. 
\end{lemma}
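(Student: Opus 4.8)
The plan is to derive a lower bound on every admissible cover of $A$ directly from the ball-growth hypothesis and then read off $\mathcal{H}^s(A)$ from its definition. I first note the (implicit but essential) assumption that $\mu$ is nontrivial, i.e. $0<\mu(A)$; without this the desired conclusion $\mathcal{H}^s(A)>0$ is false. Recall that $\mathcal{H}^s(A)=\lim_{\rho\to 0^+}\mathcal{H}^s_\rho(A)$, where
\[
	\mathcal{H}^s_\rho(A)=\inf\sum_i (\diam U_i)^s
\]
with the infimum taken over all countable covers $\{U_i\}$ of $A$ by subsets of $A$ with $\diam U_i\le\rho$, and that $\mathcal{H}^s_\rho(A)$ increases as $\rho$ decreases.

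First I would fix any $\rho<\delta$ and an arbitrary countable cover $\{U_i\}$ of $A$ by nonempty subsets with $\diam U_i\le\rho$. For each $i$ choose a point $x_i\in U_i$; since $U_i\subseteq A$ we have $x_i\in A$, so the hypothesis applies at $x_i$. As $U_i\subseteq\overline{B}(x_i,\diam U_i)$ and $\diam U_i\le\rho<\delta$, I would pick radii $r$ with $\diam U_i<r<\delta$, apply the hypothesis $\mu(B(x_i,r))\le Cr^s$, and let $r\downarrow\diam U_i$ to obtain, by monotonicity of $\mu$,
\[
	\mu(U_i)\le\mu(\overline{B}(x_i,\diam U_i))\le C(\diam U_i)^s
\]
(the singleton case $\diam U_i=0$ is covered by letting $r\to 0$, giving $\mu(\{x_i\})=0$).

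Next I would sum over $i$, using countable subadditivity of $\mu$ and the covering property $A\subseteq\bigcup_i U_i$:
\[
	\mu(A)\le\sum_i\mu(U_i)\le C\sum_i(\diam U_i)^s.
\]
Since the cover was arbitrary among those of mesh at most $\rho$, taking the infimum gives $\mathcal{H}^s_\rho(A)\ge\mu(A)/C$. This bound is uniform in $\rho<\delta$, so letting $\rho\to 0^+$ yields $\mathcal{H}^s(A)\ge\mu(A)/C>0$, as claimed.

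The only genuine subtlety is the passage from the hypothesis, which is phrased for metric balls, to the arbitrary cover sets $U_i$ appearing in the definition of Hausdorff measure; I handle it by the containment $U_i\subseteq\overline{B}(x_i,\diam U_i)$ together with the one-line limiting argument bounding the closed ball by the given estimate for open balls. (If one is content with a worse constant, one may instead use $U_i\subseteq B(x_i,2\diam U_i)$ and replace $C$ by $2^sC$; the lower bound stays positive either way.) No compactness or separability of $A$ is needed, and the argument is insensitive to whether $\mu$ is finite, provided only that $\mu(A)>0$.
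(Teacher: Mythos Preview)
Your proof is correct and is the standard argument for the Mass Distribution Principle; you also rightly flag the implicit hypothesis $\mu(A)>0$, without which the conclusion fails. The paper does not give its own proof of this lemma---it simply cites \cite[Lemma 1.2.8]{CP:FIP}---so there is nothing to compare, but what you have written is exactly the classical covering-and-subadditivity argument one finds in that reference and in Mattila's book.
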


Howroyd proved that the converse is also true, showing that Frostman's Lemma holds for compact metric spaces \cite{H:OD}. 
We recommend the reader look at Mattila's treatment of the topic \cite[Section 8]{M:GOS}. 

\begin{theorem}\label{Frostman}\cite{H:OD}
Suppose  $A$ is a compact metric space with $\mathcal{H}^s(A)>0$. 
There exists $\delta>0$ and a Radon measure $\mu$ on $A$ satisfying $\mu(A)>0$ and
\[
	\mu(E) \le \diam(E)^s \quad \text{for all } E\subset A \text{ with } \diam(E) <\delta. 
\]
\end{theorem}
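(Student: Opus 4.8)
The plan is to establish the stronger duality statement that the maximal total mass of a Frostman measure on $A$ equals the Hausdorff content $\mathcal{H}^s_\infty(A)$, and then to read off the theorem from the hypothesis $\mathcal{H}^s(A)>0$. First I would reduce from Hausdorff measure to content: it is standard that $\mathcal{H}^s(A)>0$ forces the content $\mathcal{H}^s_\infty(A):=\inf\{\sum_i \diam(E_i)^s : A\subseteq \bigcup_i E_i\}$ to equal some number $c>0$, since content and measure vanish simultaneously. Compactness of $A$ enters twice: to reduce all covers to finite ones, and at the very end to extract a weak-$*$ limit of approximating measures in the dual of $C(A)$.

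The heart of the argument is a finite max-flow/min-cut duality carried out on truncated nested covers. For each $n$ I would fix a finite cover $\mathcal{Q}_n$ of $A$ by sets of diameter at most $2^{-n}$, and build a finite directed network with a source $S$, one node per covering set at each level $0\le n\le N$, a sink $T$, edges from $S$ to level-$0$ nodes, edges from a level-$n$ node $Q$ to each level-$(n+1)$ node $Q'$ meeting $Q$, and edges from level-$N$ nodes to $T$; the capacity of the node representing $Q$ is $\diam(Q)^s$. A minimal cut in this network is exactly a subcollection of covering sets blocking every $S$--$T$ path, i.e.\ a cover of $A$, so its capacity is at least the content $c$, whence the min-cut value is at least $c$. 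By the max-flow/min-cut theorem the maximal flow is then at least $c$, and the flow saturating the sink defines a measure $\mu_N$ supported on the level-$N$ sets with $\mu_N(A)\ge c$ and, from the node-capacity constraints, $\mu_N(Q)\le \diam(Q)^s$ for every covering set $Q$.

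Finally I would pass to the limit. The $\mu_N$ have uniformly bounded total mass, so by Banach--Alaoglu (equivalently Prokhorov, using compactness of $A$) a subsequence converges weak-$*$ to a Radon measure $\mu$ on $A$ with $\mu(A)\ge c>0$. The Frostman bound survives in the limit for sets of small diameter: any $E$ with $\diam(E)<\delta$ is, for large $n$, contained in a single covering set of comparable diameter, so $\mu(E)\le C\diam(E)^s$ for a dimensional constant $C$, and rescaling $\mu$ by $1/C$ (which keeps the mass positive) removes the constant. By outer regularity it suffices to verify the bound on closed sets, which is precisely what the covering argument controls, and the restriction $\diam(E)<\delta$ is the natural range produced by the limiting procedure.

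The hard part will be the passage from the clean dyadic-cube bookkeeping available in $\R^n$ to an honest argument in a general compact metric space: covering sets at different levels may overlap in complicated ways, so the ``tree'' above is really a layered directed acyclic graph, and one must ensure both that the min-cut still coincides with the content and that overlaps do not inflate $\mu(E)$ in the limit. Controlling these overlaps uniformly in $n$ --- so that the weak-$*$ limit genuinely satisfies $\mu(E)\le \diam(E)^s$ rather than a version weakened by multiplicity --- is exactly the technical content of Howroyd's theorem, and is the step I would expect to demand the most care.
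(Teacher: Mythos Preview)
The paper does not prove this theorem at all; it is stated as a citation to Howroyd \cite{H:OD}, with a pointer to Mattila's book for the proof. So there is no ``paper's own proof'' to compare against. Your proposal is a reasonable outline of one standard route to Frostman's lemma in compact metric spaces --- the max-flow/min-cut argument --- though note that Howroyd's original proof proceeds differently, via weighted Hausdorff measures and a Hahn--Banach separation argument rather than network flows. Both approaches are valid; the flow argument you sketch is closer to the treatment in Bishop--Peres, while the paper's reference to Mattila's Section 8 points toward the dyadic-cube construction in $\R^n$ and Howroyd's extension. Since the paper treats this as a black-box citation, any correct proof would be acceptable here, and your sketch is essentially sound, with the caveat you yourself identify: in the absence of genuine dyadic cubes one must control overlap multiplicities carefully, typically by building the levels from maximal $2^{-n}$-separated nets so that each point lies in a bounded number of sets at each scale.
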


We now have a lower bound on the Hausdorff dimension of products of sets.
As the proof is identical as the Euclidean case (now using Lemma \ref{MDP} and Theorem \ref{Frostman}), we will simply refer the reader to the proof of Theorem 8.10 in \cite{M:GOS}. 

\begin{corollary}\label{dimension-cor}
Let $(A,d_A)$, $(B,d_B) $ be compact metric spaces.
Equip $A\times B$ with the metric 
\[
	d_{A\times B}((a_1,b_1),(a_2,b_2)) := d_A (a_1,a_2) + d_B(b_1,b_2).
\]
Then
\[
	\dim_{\Hau}(A) + \dim_{\Hau}(B) \le \dim_{\Hau} (A\times B).
\]
\end{corollary}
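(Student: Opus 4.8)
The plan is to prove the product dimension inequality
\[
\dim_{\Hau}(A) + \dim_{\Hau}(B) \le \dim_{\Hau}(A\times B)
\]
by exhibiting, for suitable exponents $s<\dim_{\Hau}(A)$ and $t'<\dim_{\Hau}(B)$, a measure on $A\times B$ satisfying the ball-growth hypothesis of the Mass Distribution Principle (Lemma \ref{MDP}) with exponent $s+t'$. First I would fix arbitrary reals $s<\dim_{\Hau}(A)$ and $t'<\dim_{\Hau}(B)$; it suffices to show $\dim_{\Hau}(A\times B)\ge s+t'$, since letting $s\uparrow\dim_{\Hau}(A)$ and $t'\uparrow\dim_{\Hau}(B)$ then yields the claim. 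By definition of Hausdorff dimension, $\mathcal{H}^s(A)>0$ and $\mathcal{H}^{t'}(B)>0$. Since $A$ and $B$ are compact, Frostman's Lemma in the form of Theorem \ref{Frostman} supplies Radon measures $\mu$ on $A$ and $\nu$ on $B$, together with a threshold $\delta>0$, satisfying $\mu(E)\le \diam(E)^s$ and $\nu(F)\le \diam(F)^{t'}$ for all sufficiently small $E\subset A$, $F\subset B$.

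The key step is to form the product measure $\mu\times\nu$ on $A\times B$ and verify the Frostman-type estimate for balls. With the chosen metric $d_{A\times B}((a_1,b_1),(a_2,b_2)) = d_A(a_1,a_2)+d_B(b_1,b_2)$, a ball $B((a,b),r)$ is contained in the product $B_A(a,r)\times B_B(b,r)$ of the coordinate balls, because if $d_A(a_1,a)+d_B(b_1,b)<r$ then each summand is less than $r$. Hence
\[
(\mu\times\nu)(B((a,b),r)) \le \mu(B_A(a,r))\cdot \nu(B_B(b,r)) \le (2r)^s \cdot (2r)^{t'} = 2^{s+t'} r^{s+t'},
\]
valid for $r$ small enough that both coordinate balls have diameter below $\delta$. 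This is exactly the hypothesis of Lemma \ref{MDP} with exponent $s+t'$ and constant $C=2^{s+t'}$, so $\mathcal{H}^{s+t'}(A\times B)>0$, giving $\dim_{\Hau}(A\times B)\ge s+t'$.

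Since this is precisely the argument used in the Euclidean setting, the only substantive point to check is that each ingredient transfers verbatim to the present abstract compact metric spaces: Frostman's Lemma holds for compact metric spaces by Howroyd's theorem (Theorem \ref{Frostman}), and the Mass Distribution Principle (Lemma \ref{MDP}) is stated for an arbitrary metric space. The main obstacle—really the only place one must be careful—is the containment of a product-metric ball in the product of coordinate balls, which hinges on the fact that the chosen metric is the $\ell^1$-sum of the factor metrics; this is what makes the product measure estimate multiplicative and hence additive in the exponents. Everything else is a direct transcription of the proof of Theorem 8.10 in \cite{M:GOS}, so I would simply refer the reader there rather than reproduce the routine details.
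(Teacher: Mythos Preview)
Your proposal is correct and follows exactly the approach the paper intends: the paper does not give an independent proof but simply refers the reader to the proof of Theorem 8.10 in \cite{M:GOS}, noting that one uses Lemma \ref{MDP} and Theorem \ref{Frostman} in place of their Euclidean counterparts. Your write-up is precisely that argument spelled out---Frostman measures on each factor, product measure, ball containment via the $\ell^1$-sum metric, then the Mass Distribution Principle---so there is nothing to add.
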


The opposite inequality does not hold in general. For example, Hatano constructed compact sets $E_1, E_2\subset \R^2$ of Hausdorff dimension $0$ for which $\dim_{\Hau} (E_1\times E_2)  = 1$ \cite[Theorem 4]{H:NOH}. 
Fortunately, we will not need the statement in full generality.

\begin{lemma}
Let $(A,d_A)$ be a compact metric space.
For $1\le\alpha<\infty$, let $[0,1]_\alpha$ be the interval $[0,1]$ equipped with the metric $d_\alpha(a_1,a_2) := |a_1-a_2|^{1/\alpha}$.
Then
\[
	\dim_{\Hau}([0,1]_\alpha\times A) = \alpha + \dim_{\Hau} (A). 
\]
\end{lemma}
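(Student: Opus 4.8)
The plan is to prove the identity $\dim_{\Hau}([0,1]_\alpha \times A) = \alpha + \dim_{\Hau}(A)$ by establishing the two inequalities separately, since equality here is special and does not follow from general product dimension theory. The lower bound is already handed to us: I would first observe that $[0,1]_\alpha$ is a compact metric space with $\dim_{\Hau}([0,1]_\alpha) = \alpha$ — this is immediate from the fact that the $\alpha$-snowflake of an interval has dimension equal to the dimension of $[0,1]$ times $\alpha$, i.e. $1 \cdot \alpha = \alpha$, which one verifies either by a direct covering computation or by appealing to the standard snowflaking identity $\dim_{\Hau}((X,d^{1/\alpha})) = \alpha \dim_{\Hau}((X,d))$. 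Then Corollary \ref{dimension-cor}, applied with $B = [0,1]_\alpha$ (note the metric on the product there is exactly the sum metric used in the statement), gives
\[
	\alpha + \dim_{\Hau}(A) = \dim_{\Hau}([0,1]_\alpha) + \dim_{\Hau}(A) \le \dim_{\Hau}([0,1]_\alpha \times A).
\]

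For the upper bound I would argue directly by constructing efficient covers. Set $s = \dim_{\Hau}(A)$ and fix any $\sigma > s$. By definition of Hausdorff dimension, $\mathcal{H}^\sigma(A) = 0$, so for each $\delta > 0$ there is a cover $\{U_i\}$ of $A$ with $\sum_i \diam(U_i)^\sigma$ small and each $\diam(U_i) < \delta$. The key geometric idea is that in $[0,1]_\alpha$ a ball of radius $r$ (in the snowflake metric $d_\alpha$) is an ordinary interval of Euclidean length $r^\alpha$, so I can cover $[0,1]_\alpha$ by roughly $r^{-\alpha}$ sets each of $d_\alpha$-diameter at most $r$. The plan is to cover each factor at a common scale: for a set $U_i \subset A$ of diameter $\rho_i$, partition $[0,1]_\alpha$ into about $\rho_i^{-\alpha}$ pieces of $d_\alpha$-diameter $\rho_i$, forming product sets $V_{i,\ell} = J_{i,\ell} \times U_i$. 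Under the sum metric each such product has diameter comparable to $\rho_i$, and summing $\diam(V_{i,\ell})^{\alpha+\sigma}$ over $\ell$ contributes roughly $\rho_i^{-\alpha} \cdot \rho_i^{\alpha+\sigma} = \rho_i^\sigma$; summing over $i$ recovers $\sum_i \diam(U_i)^\sigma$, which is small. This shows $\mathcal{H}^{\alpha+\sigma}([0,1]_\alpha \times A) = 0$, hence $\dim_{\Hau}([0,1]_\alpha \times A) \le \alpha + \sigma$, and letting $\sigma \downarrow s$ finishes the upper bound.

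I expect the main obstacle to be the bookkeeping in the upper bound, specifically the claim that $[0,1]_\alpha$ admits a cover by $\lesssim r^{-\alpha}$ sets of $d_\alpha$-diameter $\le r$. This requires translating between the snowflake metric and Euclidean length carefully: a $d_\alpha$-diameter of $r$ corresponds to Euclidean length $r^\alpha$, so $[0,1]$ needs $\lceil r^{-\alpha} \rceil$ subintervals, and I must make sure the resulting diameter bounds in the product (under the sum metric $d_A + d_\alpha$) stay uniformly comparable to $r$ so that the exponent arithmetic $(\,\rho_i^{-\alpha})(\rho_i^{\alpha+\sigma}) = \rho_i^\sigma$ goes through cleanly. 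A secondary subtlety is confirming that $\dim_{\Hau}([0,1]_\alpha) = \alpha$ rather than merely bounding it; since we only need this as an input to the already-proven Corollary \ref{dimension-cor} for the lower bound, a one-line covering estimate for $[0,1]_\alpha$ suffices. Everything else — the compactness hypotheses of Corollary \ref{dimension-cor} are satisfied because $[0,1]_\alpha$ is compact (the snowflake metric induces the same topology as the Euclidean one) — is routine.
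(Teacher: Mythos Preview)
Your proposal is correct and matches the paper's proof essentially line for line: the lower bound via Corollary~\ref{dimension-cor} together with $\dim_{\Hau}([0,1]_\alpha)=\alpha$, and the upper bound by covering each $U_i\subset A$ of diameter $\rho_i$ with about $\rho_i^{-\alpha}$ intervals of Euclidean length $\rho_i^\alpha$ (hence $d_\alpha$-diameter $\rho_i$) so that the $(\alpha+\sigma)$-sum collapses to $\sum_i \rho_i^\sigma$. The bookkeeping concerns you flag are exactly the ones the paper handles, and they go through without difficulty.
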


\begin{proof}
By Corollary \ref{dimension-cor}, 
\[
		\dim_{\Hau}([0,1]_\alpha\times A) \ge \alpha + \dim_{\Hau} (A). 
\]
It remains to show the reverse inequality, which is just a standard exercise in measure theory.
We will include the proof here for completeness. 

Fix $\epsilon ,\delta>0$.
As $\mathcal{H}_\delta^{\dim_{\Hau}(A) +\epsilon}(A) =0$,
 there exists a covering $\{E_i\}$ of $A$ satisfying $r_i:=\diam(E_i)<\delta$ for all $i$ and 
\[
	\sum_i r_i^{\dim_{\Hau}(A)+\epsilon}<\delta.
\]
We may assume $r_i> 0$ for all $i$. 

For each $i$, cover $[0,1]$ with approximately $\frac{1}{r_i^\alpha}$ intervals $I_{ij}$ of length $r_i^\alpha$ (so each of these intervals will  have diameter $r_i$ in $[0,1]_\alpha$).
Then $\{I_{ij} \times E_i\}_{i,j}$ is a covering of $[0,1]_\alpha\times A$ with
\[
	\diam (I_{ij} \times E_i) = 2r_i <2\delta. 
\]	
Moreover,
\begin{align*}
	\sum_{i,j} \diam (I_{ij}\times E_i)^{\alpha + \text{dim}_{\Hau}(A)+\epsilon} 
& = \sum_{i,j} (2r_i)^{\alpha +\dim_{\Hau}(A) +\epsilon} \\
&\approx \sum_i \frac{1}{r_i^\alpha} \cdot (2r_i)^{\alpha +\dim_{\Hau}(A) +\epsilon} \\
& < 2^{\alpha+\dim_{\Hau} (A) +\epsilon }\delta,
\end{align*}
which shows
\[
	\mathcal{H}_{2\delta}^{\alpha+\dim_{\Hau}(A)+\epsilon} ([0,1]_\alpha \times A) \lesssim 2^{\alpha+\dim_{\Hau}(A)+\epsilon} \delta. 
\]	
As $\delta,\epsilon>0$ are arbitary, we may conclude 
\[
	\dim_{\Hau} ([0,1]_\alpha\times A) = \alpha +\dim_{\Hau}(A). 
\]
\end{proof}

As a consequence, we can use induction to calculate the Hausdorff dimension of a product of a box with a compact set.

\begin{proposition}\label{snowflake-dimension}
For $j\in \N_{\ge 2}$, equip $\R^j$ with the metric
\[
	d_j ((x_1,\ldots , x_j) , (y_1,\ldots ,y_j)) = \sum_{i=1}^j |x_i -y_i|^{1/i}. 
\]
For all compact sets $E\subset \R$,
\begin{align*}
	\dim_{\Hau} ([0,1]^{j-1}\times E)& = 1+ \cdots + (j-1) + j\dim_{\Hau}(E)\\
& = \frac{j(j-1)}{2} + j\dim_{\Hau}(E),
\end{align*}
where $[0,1]^j\times E$ is equipped with the restriction of  $d_j$. 
\end{proposition}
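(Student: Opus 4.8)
The plan is to prove this by induction on $j$, treating the preceding lemma as both the base case and the engine of the inductive step. The key observation is that the metric $d_j$ on $\R^j$ decomposes as a sum: writing a point of $\R^j$ as $(x_1, w)$ with $w \in \R^{j-1}$, the distance $d_j$ splits as the distance $|x_1 - y_1|$ on the first coordinate plus a rescaled copy of a $(j-1)$-dimensional snowflake metric on the remaining coordinates. More precisely, the coordinate $x_i$ is snowflaked with exponent $1/i$, so the last $j-1$ coordinates, with the index shifted, carry exponents $1/2, \ldots, 1/j$, which is exactly the metric $d_\alpha$ structure of the preceding lemma applied iteratively. I would first set up this product decomposition carefully so that $([0,1]^{j-1} \times E, d_j)$ is isometric (or biLipschitz equivalent, which suffices for Hausdorff dimension) to a product of the form $[0,1]_\alpha \times A$ for an appropriate compact metric space $A$ and an appropriate exponent $\alpha$.

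The inductive step would proceed as follows. For the base case $j = 2$, the space $[0,1] \times E$ with metric $d_2((x_1,x_2),(y_1,y_2)) = |x_1 - y_1| + |x_2 - y_2|^{1/2}$ is precisely $[0,1]_1 \times [0,1]_2$-type product, and the preceding lemma (with $\alpha = 1$ on the $x_1$-factor and viewing $E$ as snowflaked by exponent $1/2$) gives $\dim_{\Hau}([0,1] \times E) = 1 + 2\dim_{\Hau}(E)$. For the inductive step, I would assume the result for $j-1$, namely that $[0,1]^{j-2} \times E$ under the appropriately shifted snowflake metric has dimension $\frac{(j-1)(j-2)}{2} + (j-1)\dim_{\Hau}(E)$. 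I would then peel off the first coordinate $x_1$, which contributes the factor $[0,1]_1$ (exponent $1$), and apply the preceding lemma with $A$ equal to the $(j-1)$-dimensional factor and $\alpha = 1$. A subtlety is that the remaining factor, as it sits inside $d_j$, uses exponents $1/2, \ldots, 1/j$, not $1/1, \ldots, 1/(j-1)$; so I would instead peel off the \emph{last} coordinate, which is snowflaked with exponent $1/j$, giving the factor $[0,1]_j$ and leaving the first $j-1$ coordinates with exponents $1/1, \ldots, 1/(j-1)$ — exactly the $(j-1)$-case — thereby matching the inductive hypothesis cleanly. The preceding lemma then yields the additional $j$ from the $[0,1]_j$ factor.

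The main technical point to verify, which I expect to be the principal obstacle, is ensuring the preceding lemma applies verbatim at each stage: that lemma is stated for a product $[0,1]_\alpha \times A$ where the two factors are combined by a \emph{summed} metric, and I must confirm that at each inductive stage the snowflaked box factor is genuinely one of the factors of such a summed product, with the other factor being a compact metric space whose dimension the inductive hypothesis controls. Since $d_j$ is by definition the sum $\sum_i |x_i - y_i|^{1/i}$, this product-of-summed-metrics structure holds on the nose, so no biLipschitz fudging is needed and the decomposition is literally isometric. Finally, I would assemble the telescoping sum $1 + 2 + \cdots + (j-1) + j\dim_{\Hau}(E)$ and collapse it to $\frac{j(j-1)}{2} + j\dim_{\Hau}(E)$, which is elementary. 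The whole argument is essentially bookkeeping once the product decomposition and the repeated application of the preceding lemma are set up correctly.
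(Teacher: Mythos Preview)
Your overall strategy---iterate the preceding lemma to peel off one interval factor at a time---is exactly what the paper has in mind, and the metric $d_j$ does split as a literal sum so no biLipschitz adjustment is needed. However, your inductive step contains a bookkeeping slip: in the product $[0,1]^{j-1}\times E$, the coordinates $x_1,\ldots,x_{j-1}$ range over $[0,1]$ and the coordinate $x_j$ ranges over $E$. So the ``last coordinate,'' snowflaked with exponent $1/j$, is $E$, not $[0,1]$; peeling it off does \emph{not} give a factor $[0,1]_j$, and the preceding lemma (which requires the interval factor to be $[0,1]_\alpha$) does not apply to that peel. Likewise, what remains after removing the $j$-th coordinate is $[0,1]^{j-1}$ with exponents $1,\ldots,1/(j-1)$, which is not the $(j-1)$-case of the proposition (that case concerns $[0,1]^{j-2}\times E$).

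The clean fix is not to induct on the statement of the proposition at all, but simply to apply the lemma $j-1$ times, peeling from the \emph{outside in}: write
\[
\bigl([0,1]^{j-1}\times E,\,d_j\bigr)\;=\;[0,1]_1 \times \Bigl([0,1]_2\times\cdots\times[0,1]_{j-1}\times (E,|\cdot|^{1/j})\Bigr),
\]
apply the lemma with $\alpha=1$ to strip off $[0,1]_1$, then with $\alpha=2$ to strip off $[0,1]_2$, and so on. After $j-1$ applications you are left with $(E,|\cdot|^{1/j})$, whose Hausdorff dimension is $j\dim_{\Hau}(E)$ since snowflaking by exponent $1/j$ scales dimension by $j$. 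Summing the contributions gives $1+2+\cdots+(j-1)+j\dim_{\Hau}(E)$, as claimed. This is the induction the paper intends; your write-up just needs the factor identification corrected.
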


We can now prove our main result of the section, Theorem \ref{vertical-dimension-theorem}. 

\begin{proof}[Proof of Theorem \ref{vertical-dimension-theorem}]
Fix $t\in \R$ and $0\le \beta\le \frac{k(k+1)}{2}.$
If $\beta=0$, define $E_\beta=\emptyset$.
Hence assume otherwise.

For each nonnegative integer $j$, define $h_j = \frac{j(j+1)}{2}$. 
Choose the unique index $j$ such that $h_{j-1} < \beta\le h_j$.
As $0<\frac{\beta-h_{j-1}}{j}\le 1$, there exists a compact set $\widetilde{E_\beta}\subset \R$ with $\dim_{\Hau} (\widetilde{E_\beta}) = \frac{\beta - h_{j-1}}{j}$ (the existence of which is guaranteed by Theorem \ref{Cantor-exist}). 
By Proposition \ref{snowflake-dimension}, $[0,1]^{j-1}\times \widetilde{E}_\beta \subset (\R^j,d_j)$ has Hausdorff dimension equal to 
\[
	\frac{(j-1)j}{2} + j \left( \frac{\beta-h_{j-1}}{j}\right) = \beta.
\]
As shown in Section \ref{sec:havm}, the gauge distance takes the form
\[
	d((t,u_k,\ldots , u_0), (t,v_k,\ldots , v_0)) =\sum_{j=0}^k |x_j-y_j|^{1/(k+1-j)}
\]
on $\{x=t\}$.
If we define $E_{t,\beta}:= \{t\}\times [0,1]^{j-1} \times \widetilde{E}_\beta\times \{0\}^{k-j+1}\subset \{x=t\}$, then
\[
	\dim_{\Hau} (E_{t,\beta}) = \beta. 
\]
\end{proof}

Now fix $f\in C^\infty(\R)$ and $t\in \R$.
Also let $0\le \alpha\le 1$ and $0\le \beta \le \frac{(k+1)(k+2)}{2}$ be given. 
By Proposition \ref{large-Jft}, there exists $F_\alpha\subset J^k(\R)$ satisfying
\[
	\dim_{\Hau} J_{f,t}( F_\alpha) = \alpha \quad\text{and}\quad \dim_{\Hau} V_{f,t}( F_\alpha)=0.
\]
By Corollary \ref{large-Vft-cor}, there exists  $E_{t,\beta}\subset J^k(\R)$ for which 
\[
	\dim_{\Hau}V_{f,t} (E_{t,\beta})  = \beta \quad\text{and}\quad \dim_{\Hau} J_{f,t}(E_{t,\beta}) = 0. 
\]
Observe then that
\[
	\dim_{\Hau} J_{f,t} (F_\alpha \cup E_{t,\beta})  =  \alpha \quad\text{and}\quad \dim_{\Hau}V_{f,t} (F_\alpha \cup E_{t,\beta})  = \beta .
\]
We have proven the following satisfying result.
\begin{theorem}\label{satisfying-theorem}
Fix $f\in C^\infty(\R)$ and $t\in \R$.
For all $E\subset J^k(\R)$, 
\[
	0\le \dim_{\Hau} J_{f,t}(E) \le 1 \quad\text{and}\quad 0\le \dim_{\Hau} V_{f,t}(E) \le\frac{(k+1)(k+2)}{2}.
\]
Moreover, for all pairs $(\alpha,\beta) \in [0,1]\times \left[ 0,\frac{(k+1)(k+2)}{2}\right]$,  there exists $E_{\alpha,\beta}\subset J^k(\R)$ (depending on $f$ and $t$) satisfying
\[
		\dim_{\Hau} J_{f,t} (E_{\alpha,\beta})  =  \alpha \quad\text{and}\quad \dim_{\Hau}V_{f,t} (E_{\alpha,\beta})  = \beta .
\]
\end{theorem}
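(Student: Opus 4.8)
The plan is to prove Theorem \ref{satisfying-theorem} by reducing it to two complementary results already established in the excerpt and then stitching them together by a disjoint union. The two pieces are Proposition \ref{large-Jft}, which produces for any target horizontal dimension $\alpha \in [0,1]$ a set that is $J_{f,t}$-large (dimension $\alpha$) and $V_{f,t}$-null (dimension $0$), and Corollary \ref{large-Vft-cor}, which produces for any target vertical dimension $\beta \in [0, \frac{(k+1)(k+2)}{2}]$ a set that is $V_{f,t}$-large (dimension $\beta$) and $J_{f,t}$-null (dimension $0$). The upper bounds in the statement are immediate: $\dim_{\Hau} J_{f,t}(E) \le \dim_{\Hau}(\im(j^k(f))) = 1$ and $\dim_{\Hau} V_{f,t}(E) \le \dim_{\Hau}(\{x=t\}) = \frac{(k+1)(k+2)}{2}$, both of which follow from the fact that the images $J_{f,t}(E) \subset \im(j^k(f))$ and $V_{f,t}(E) \subset \{x=t\}$ live in sets of those respective dimensions (the latter computed via Theorem \ref{vertical-dimension-theorem}).

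First I would fix $f$, $t$, and a pair $(\alpha,\beta)$. I would invoke Proposition \ref{large-Jft} to obtain a compact set $F_\alpha \subset J^k(\R)$ with $\dim_{\Hau} J_{f,t}(F_\alpha) = \alpha$ and $\dim_{\Hau} V_{f,t}(F_\alpha) = 0$, and Corollary \ref{large-Vft-cor} to obtain a compact set $E_{t,\beta} \subset \{x=t\}$ with $\dim_{\Hau} V_{f,t}(E_{t,\beta}) = \beta$ and $\dim_{\Hau} J_{f,t}(E_{t,\beta}) = 0$. Setting $E_{\alpha,\beta} := F_\alpha \cup E_{t,\beta}$, I would then compute the dimensions of the two images using the finite-stability (and monotonicity) of Hausdorff dimension. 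Since $J_{f,t}(E_{\alpha,\beta}) = J_{f,t}(F_\alpha) \cup J_{f,t}(E_{t,\beta})$ and Hausdorff dimension of a finite union is the maximum of the dimensions, we get $\dim_{\Hau} J_{f,t}(E_{\alpha,\beta}) = \max\{\alpha, 0\} = \alpha$; symmetrically $\dim_{\Hau} V_{f,t}(E_{\alpha,\beta}) = \max\{0, \beta\} = \beta$. This finishes the argument.

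The point worth checking carefully—and really the only subtlety—is the finite stability of Hausdorff dimension under the image maps. The key facts are that $\dim_{\Hau}(A \cup B) = \max\{\dim_{\Hau}(A), \dim_{\Hau}(B)\}$ for any two sets, and that images commute with unions, i.e. $g(A \cup B) = g(A) \cup g(B)$ for any function $g$. Both are elementary and standard, so no genuine obstacle arises here; the entire theorem is essentially a bookkeeping assembly of the harder constituent results. The one place to be attentive is simply that the two building-block sets may be chosen compact (they are, by Proposition \ref{large-Jft} and Corollary \ref{large-Vft-cor}), so their union is compact and in particular Borel, matching the hypothesis that $E$ ranges over Borel sets. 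I do not anticipate any technical difficulty beyond confirming these routine set-theoretic and dimension-theoretic facts; the real work has already been carried out in the two preceding subsections.
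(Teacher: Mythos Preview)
Your proposal is correct and follows essentially the same approach as the paper: invoke Proposition \ref{large-Jft} and Corollary \ref{large-Vft-cor} to obtain the two building-block sets $F_\alpha$ and $E_{t,\beta}$, then take $E_{\alpha,\beta} = F_\alpha \cup E_{t,\beta}$ and use finite stability of Hausdorff dimension. The paper's argument is given in the paragraph immediately preceding the theorem statement and matches yours line for line.
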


%%%%%%%%%%%%%%%%%%%%%%%%%%%%%%%%%%%%%%%%%%%%%%%%%%%
%%%%%%%%%%%%%%%%%%%%%%%%%%%%%%%%%%%%%%%%%%%%%%%%%%%
%%%%%%%%%%%%%%%%%%%%%%%%%%%%%%%%%%%%%%%%%%%%%%%%%%%
%Section 7: Effects on Hausdorff dimension
%%%%%%%%%%%%%%%%%%%%%%%%%%%%%%%%%%%%%%%%%%%%%%%%%%%
%%%%%%%%%%%%%%%%%%%%%%%%%%%%%%%%%%%%%%%%%%%%%%%%%%%
%%%%%%%%%%%%%%%%%%%%%%%%%%%%%%%%%%%%%%%%%%%%%%%%%%%

\section{Effects of $J_{f,t}$ and $V_{f,t}$ on Hausdorff dimension}\label{sec:effects-proj}

In Section \ref{sec:regularity}, we briefly touched on the topic of how $J_{f,t}$ and $V_{f,t}$ affect the Hausdorff dimension of sets. 
We showed that for all $t\in \R$, $f\in C^\infty(\R)$, and $E\subset J^k(\R)$,
\[
	\dim_{\Hau}(J_{f,t}(E)) \le \min \{\dim_{\Hau}(E),1\}
\]
and
\[
	\dim_{\Hau} (V_{f,t}(E)) \le \min \left\{(k+1)\dim_{\Hau}(E), \frac{(k+1)(k+2)}{2}\right\}
\]
(Corollaries \ref{Jft-cor} and \ref{Vft-cor}, respectively).

In the previous two sections, we  considered the dimensions of images of sets by $J_{f,t}$ and by $V_{f,t}$, but we didn't emphasize the dimensions of the sets themselves. 
We will do that here.
We first restate Theorem \ref{satisfying-theorem}, showing that  nearly all possibilities for the pairs $\text{dim}_{\Hau} (E), \dim_{\Hau}(J_{f,t}(E))$ are possible after taking into account Corollary \ref{Jft-cor}.
Note $\dim_{\Hau}J^k(\R) = 1 + \frac{(k+1)(k+2)}{2}$. 

\begin{proposition}\label{full-range-dim-Jft}
Fix $t\in \R$ and $f\in C^\infty(\R)$. 
For all $0\le \alpha\le 1$ and $\alpha \le \mu \le \frac{(k+1)(k+2)}{2}$, there exists a set $E= E_{\alpha,\mu,f,t}\subset J^k(\R)$ with $\dim_{\Hau} J_{f,t} (E) = \alpha$  and $\dim_{\Hau}E=\mu$.
\end{proposition}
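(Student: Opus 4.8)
The plan is to produce $E$ as a union of two pieces taken directly from the constructions of Sections \ref{sec:sets-large-proj} and \ref{sec:sets-null-proj}: a \emph{horizontal} piece that carries all of the $J_{f,t}$-dimension $\alpha$ while itself having Hausdorff dimension only $\alpha$, together with a \emph{vertical} piece lying inside the plane $\{x=t\}$ that inflates the total dimension up to $\mu$ but is collapsed by $J_{f,t}$ to a single point. The restriction $\alpha\le \mu\le \frac{(k+1)(k+2)}{2}$ is precisely what makes both pieces available: the lower bound $\alpha\le\mu$ is forced by Corollary \ref{Jft-cor}, and the upper bound equals $\dim_{\Hau}(\{x=t\})$, which is the largest dimension a subset of the plane can have.

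For the horizontal piece, I would fix a compact set $E_\alpha\subset\R$ with $\dim_{\Hau}(E_\alpha)=\alpha$ (Theorem \ref{Cantor-exist}), choose any point $p\in\{x=t\}$, and set $A:=p\odot j^k(f)(E_\alpha)$. Proposition \ref{large-Jft} gives $\dim_{\Hau}J_{f,t}(A)=\alpha$, and since $j^k(f)$ is locally biLipschitz (Proposition \ref{RW-obs}) and left translation by $p$ is a $d_{cc}$-isometry, $A$ itself satisfies $\dim_{\Hau}(A)=\dim_{\Hau}(E_\alpha)=\alpha$. For the vertical piece, I would apply Theorem \ref{vertical-dimension-theorem} with $\beta=\mu$ (permissible exactly because $\mu\le\frac{(k+1)(k+2)}{2}$) to obtain a compact set $B:=E_{t,\mu}\subset\{x=t\}$ with $\dim_{\Hau}(B)=\mu$. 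Because $J_{f,t}(x,u_k,\ldots,u_0)=j_{x-t}^k(f)$ sends every point of $\{x=t\}$ to $j_0^k(f)$, we have $J_{f,t}(B)=\{j_0^k(f)\}$, so $\dim_{\Hau}J_{f,t}(B)=0$.

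Finally I would set $E:=A\cup B$ and invoke the finite stability of Hausdorff dimension under unions: $\dim_{\Hau}(E)=\max\{\alpha,\mu\}=\mu$ using $\alpha\le\mu$, while $\dim_{\Hau}J_{f,t}(E)=\max\{\alpha,0\}=\alpha$. I do not anticipate a genuine obstacle, as the proposition is essentially a repackaging of the two preceding sections; the only point needing care is the bookkeeping that the vertical piece exists exactly throughout the stated range of $\mu$ and contributes nothing to the dimension of the $J_{f,t}$-image, so that the two estimates do not interfere. (The degenerate cases $\alpha=0$, $\mu=0$, or $\alpha=\mu$ are all handled by the same union, choosing $E_\alpha$ to be a point or a dimension-zero Cantor set as needed.)
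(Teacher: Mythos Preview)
Your proposal is correct and is essentially the same as the paper's proof: the paper simply cites the set $E_{\alpha,\mu}=F_\alpha\cup E_{t,\mu}$ from Theorem \ref{satisfying-theorem}, which is exactly your $A\cup B$, and relies on the same facts (Proposition \ref{large-Jft}, Theorem \ref{vertical-dimension-theorem}, and $\dim_{\Hau}(F_\alpha)=\alpha$ via Proposition \ref{RW-obs}). Your write-up merely makes explicit the verification that $\dim_{\Hau}(A\cup B)=\max\{\alpha,\mu\}=\mu$, which the paper leaves implicit.
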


\begin{proof}
The set $E_{\alpha,\mu}$ from Theorem \ref{satisfying-theorem} works. 
\end{proof}

The question of whether the full range of possibilities for pairs $(\text{dim}_{\Hau} E, \dim_{\Hau}V_{f,t}(E))$ is  much more difficult, as Hausdorff dimension may increase after mapping by $V_{f,t}$.
To illustrate this, we will  consider the problem when $f$ is a nonconstant linear function.

\begin{example}\label{line-example}
Suppose $f(x) = mx+b$ for $m\ne 0$. 
By the calculation performed in the proof of Proposition \ref{Vft-Holder},
\[
	(V_{f,t}(j_x^k(f))^{-1}\odot V_{f,t}(j_y^k(f)))_s = 0
\]
for $x,y \in \R, \ s=1,\ldots , k$.
We also have 
\[
	(V_{f,t}(j_x^k(f))^{-1}\odot V_{f,t}(j_y^k(f)))_0 = (t-y)m - (t-x)m = m(x-y).
\]
This implies
\[
	d(V_{f,t}(j_x^k(f)), V_{f,t}(j_y^k(f))) = \sqrt[k+1]{|m(x-y)|}. 
\]
By Propositions \ref{Gauge-cc} and \ref{RW-obs}, for each compact set $K\subset \R$, there is a constant $C =C(K)>0$ such that 
\[
	d_{cc}(j_x^k(f), j_y^k(f))^{1/(k+1)} \approx d_{cc}(V_{f,t}(j_x^k(f)), V_{f,t}(j_y^k(f)))
\]
for all $x,y\in K$. 
 This implies for all Borel sets $\tilde{E}\subset \R$ and $t\in \R$, 
\[
	\dim_{\Hau} V_{f,t}(j^k(f)(\tilde{E})) =(k+1)\dim_{\Hau}(j^k(f)(\tilde{E})) =  (k+1) \dim_{\Hau}(\tilde{E}) .
\]
\end{example}

We have shown the following:
\begin{proposition}\label{linear-k1}
For all linear functions $f(x) = mx+b$, $m\ne 0$, and $0 \le \mu\le 1$, there exists a Borel set $E\subset J^k(\R)$ such that for all $t\in \R$, 
\[
	\dim_{\Hau} (E) = \mu\quad\text{and}\quad \dim_{\Hau} (V_{f,t}(E)) = (k+1)\mu. 
\]
\end{proposition}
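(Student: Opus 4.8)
The plan is to read the required set off directly from the computation already carried out in Example \ref{line-example}. The crucial feature of that computation is that, for a nonconstant linear $f(x) = mx+b$, one obtains the \emph{exact} identity
\[
	d(V_{f,t}(j_x^k(f)), V_{f,t}(j_y^k(f))) = |m(x-y)|^{1/(k+1)}, \qquad x,y\in\R,
\]
whose right-hand side does not involve $t$ at all. It is precisely this $t$-free distance formula that will allow a single set to serve every hyperplane parameter at once.

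Concretely, I would first invoke Theorem \ref{Cantor-exist} to produce a compact set $\tilde{E}\subset\R$ with $\dim_{\Hau}(\tilde{E}) = \mu$, and then set
\[
	E := j^k(f)(\tilde{E}) = \{\, j_x^k(f) : x\in\tilde{E}\,\}\subset J^k(\R).
\]
Since $j^k(f):\R\to J^k(\R)$ is locally biLipschitz by Proposition \ref{RW-obs} and $\tilde{E}$ is compact, the jet preserves Hausdorff dimension on $\tilde{E}$, so $\dim_{\Hau}(E) = \dim_{\Hau}(\tilde{E}) = \mu$. This half of the claim is manifestly independent of $t$, as neither $E$ nor the biLipschitz bounds of $j^k(f)$ on $\tilde{E}$ reference $t$.

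For the image under $V_{f,t}$, fix $t\in\R$. Passing from $d$ to $d_{cc}$ via Proposition \ref{Gauge-cc} and reparametrizing by the biLipschitz map $j^k(f)$, the displayed identity exhibits $x\mapsto V_{f,t}(j_x^k(f))$ as a snowflake embedding of $\R$ of exponent $1/(k+1)$ on any compact set; choosing that compact set to be $\tilde{E}$, the comparison constant depends only on $|m|$ and on $\tilde{E}$, not on $t$. A snowflake of exponent $1/(k+1)$ scales Hausdorff dimension by $k+1$, so $\dim_{\Hau}(V_{f,t}(E)) = (k+1)\dim_{\Hau}(\tilde{E}) = (k+1)\mu$. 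This is exactly the chain of equalities already displayed at the end of Example \ref{line-example}, and since nothing in the argument depends on $t$, the conclusion holds for every $t\in\R$ simultaneously with the same $E$.

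There is essentially no obstacle here: the linearity of $f$ is what forces the higher coordinates of $V_{f,t}(j_x^k(f))^{-1}\odot V_{f,t}(j_y^k(f))$ to vanish, leaving only the bottom coordinate $m(x-y)$, and this collapse upgrades the merely $\frac{1}{k+1}$-H\"older map $V_{f,t}$ (Proposition \ref{Vft-Holder}) to an honest snowflake on $\im(j^k(f))$. The only point that merits explicit mention is the $t$-independence of $E$, which is immediate once one observes that $t$ has already dropped out of the exact distance formula.
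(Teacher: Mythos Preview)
Your proposal is correct and follows essentially the same route as the paper: the proposition is stated immediately after Example \ref{line-example} with the words ``We have shown the following,'' so the paper's proof \emph{is} that example, and you have simply made explicit the choice of $\tilde{E}$ via Theorem \ref{Cantor-exist} and spelled out the $t$-independence that the paper leaves implicit.
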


By appending a set to $E$ in the previous proposition, we can show that even more pairs $(\dim_{\Hau}(E), \dim_{\Hau}(V_{f,t}(E)))$ are possible (where now $t$ is fixed).  

\begin{theorem}
For all $t\in \R$, linear functions $f(x)= mx+b$, $m\ne 0$, $0\le \mu\le 1$, and $0 \le \beta \le (k+1)\mu$, there exists a Borel set $E_{t,f,\mu,\beta}\subset J^k(\R)$ such that 
\[
	\dim_{\Hau} (E_{t,f,\mu,\beta}) = \mu\quad\text{and}\quad \dim_{\Hau} (V_{f,t}(E_{t,f,\mu,\beta})) = \beta.
\]
\end{theorem}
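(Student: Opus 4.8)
The plan is to realize $E_{t,f,\mu,\beta}$ as a union of two pieces, each supplied by a result already established, and then to invoke the finite stability of Hausdorff dimension. One piece will carry the prescribed vertical image dimension $\beta$, while the other will be $V_{f,t}$-null but large enough to push the dimension of the source set up to $\mu$. Concretely, I would set $\nu := \beta/(k+1)$; the hypothesis $0 \le \beta \le (k+1)\mu$ guarantees $0 \le \nu \le \mu \le 1$, so that $\nu$ is an admissible source dimension.

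For the first piece, I would apply Proposition \ref{linear-k1} with the parameter $\mu$ there replaced by $\nu$: since $f$ is linear with $m \ne 0$, this produces a Borel set $G \subset J^k(\R)$ with $\dim_{\Hau}(G) = \nu$ and $\dim_{\Hau}(V_{f,t}(G)) = (k+1)\nu = \beta$, valid for every $t$. For the second piece, I would fix any $p \in \{x=t\}$ and apply Proposition \ref{large-Jft} with $\alpha = \mu$ (legitimate since $\mu \le 1$), giving the compact set $F := p \odot j^k(f)(E_\mu)$ with $\dim_{\Hau}(V_{f,t}(F)) = 0$. Because left translation by $p$ is a $d_{cc}$-isometry and $j^k(f)$ is locally biLipschitz (Proposition \ref{RW-obs}), $F$ is the biLipschitz image of $E_\mu \subset \R$, so $\dim_{\Hau}(F) = \dim_{\Hau}(E_\mu) = \mu$.

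Taking $E_{t,f,\mu,\beta} := F \cup G$, I would finish using that Hausdorff dimension is finitely stable together with the set identity $V_{f,t}(F \cup G) = V_{f,t}(F) \cup V_{f,t}(G)$. This yields $\dim_{\Hau}(E_{t,f,\mu,\beta}) = \max\{\mu, \nu\} = \mu$ (since $\nu \le \mu$) and $\dim_{\Hau}(V_{f,t}(E_{t,f,\mu,\beta})) = \max\{0, \beta\} = \beta$, as required. There is essentially no obstacle here: the substance lives in Propositions \ref{linear-k1} and \ref{large-Jft}, and the only points requiring care are the two arithmetic inequalities $\nu \le \mu$ and $\beta \ge 0$, which are precisely what force the two maxima to equal the prescribed values. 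The degenerate cases ($\beta = 0$, where $G$ may be taken empty, and $\mu = 0$, which forces $\beta = 0$) are covered by the same union.
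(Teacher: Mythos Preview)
Your proposal is correct and essentially identical to the paper's proof: both take the union of a set from Proposition \ref{linear-k1} (with source dimension $\beta/(k+1)$) and a left-translate of $j^k(f)(E_\mu)$ from Proposition \ref{large-Jft}, then use finite stability of Hausdorff dimension together with $\beta/(k+1) \le \mu$ to conclude. The only cosmetic difference is that the paper fixes $p = (t,0,\ldots,0)$ while you allow any $p \in \{x=t\}$.
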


\begin{proof}
Fix $t,f,\mu, \beta$ as in the statement. 
By Proposition \ref{linear-k1}, there exists a set $E\subset J^k(\R)$ such that
\[
	\dim_{\Hau}(E) = \frac{\beta}{k+1} \quad\text{and}\quad \dim_{\Hau}(V_{f,t}(E)) = \beta. 
\]
For a set $\tilde{F}\subset \R$ with $\dim_{\Hau}(\tilde{F}) = \mu$, define
\[
	F := (t,0,\ldots   ,0) \odot j^k(f)(\tilde{F})  = \{(t,0,\ldots , 0) \odot j^k_x(f):x\in \tilde{F}\}. 
\]
We showed in Proposition \ref{large-Jft} that
\[
	\dim_{\Hau}(V_{f,t}(F)) = 0. 
\]
Moreover, by left-invariance and Proposition \ref{RW-obs},
\[
	\dim_{\Hau}(F) = \dim_{\Hau} j^k(f)(\tilde{F}) = \dim_{\Hau}(\tilde{F})  = \mu.
\]
This implies
\[
	\dim_{\Hau}(E\cup F) = \max\{ \mu, \beta/(k+1)\} = \mu \quad\text{and}\quad \dim_{\Hau}(V_{f,t}(E\cup F) ) = \beta,
\]
and $E_{t,f,\mu,\beta} := E\cup F$ works. 
\end{proof}

\begin{comment}%%%%
If $f(x) = ax+b$ for $a\ne 0$, then
\begin{align*}
((x,u_k,\ldots  , u_0) \odot j_{x-t}^k(f)^{-1})_0 &=  (t-x)(u_1-a) + (u_0-a(x-t)-b)  + \sum_{j=2}^k \frac{(t-x)^{j-s}}{(j-s)!}\cdot u_j \\
& = (x-t)\cdot -u_1 + u_0-b + \sum_{j=2}^k \frac{(t-x)^{j}}{j!}\cdot u_j; \\
((x,u_k,\ldots  , u_0) \odot j_{x-t}^k(f)^{-1})_1 &= u_1-a + \sum_{j=2}^k \frac{(t-x)^{j-1}}{(j-1)!}\cdot u_j; \text{ and} \\
((x,u_k,\ldots  , u_0) \odot j_{x-t}^k(f)^{-1})_s&=\sum_{j=s}^k \frac{(t-x)^{j-s}}{(j-s)!}\cdot u_j, \qquad s= 2,\ldots , k.
\end{align*}
It's not clear even in this simple example how one would construct a set for which its Hausdorff dimension is increased when mapped under $V_{f,t}$, much less by a factor of $k+1$. 
We conclude this section by formally stating this problem.
\end{comment}%%%%

It's not clear if one could prove a similar result if $\mu$ is allowed to vary between $0$ and $\frac{k+2}{2}$. 
It also isn't clear what happens when one considers more general functions or even constant functions. 
The nonconstant linear function case is  simple because many terms drop out when calculating the coordinates of $V_{f,t}(j_x^k(f))^{-1}\odot V_{f,t}(j_y^k(f))$ for $x,y\in \R$. 
We will conclude this section by stating the problem in general. 

\begin{question}
Fix $t\in \R$ and $f\in C^\infty(\R)$.
For which $0<\mu \le \frac{k+2}{2}$ and $0 < \beta \le (k+1)\mu$ does there exist a set $F = F_{\mu,\beta,t,f}$ satisfying $\dim_{\Hau} F = \mu$ and $\dim_{\Hau}V_{f,t}(F) = \beta$?
What if $f$ is assumed to be a constant linear function?
\end{question}
%%%%%%%%%%%%%%%%%%%%%%%%
\begin{comment}%%%%%%%55%%%
We  can obtain a positive result for this question if we restrict $\mu$ to lie in $[0,1]$ and assume a derivative condition on $f$. 

\begin{theorem}
	Fix $f\in C^\infty(\R)$.
If $f^{(k+1)}(t) \ne 0$ for some $t\in \R$, then for all $0\le \mu \le 1$ and $0\le \beta \le (k+1)\mu$, there exists a set $F=F_{\mu,\beta,t,f}$ satisfying $\dim_{\Hau} F = \mu$ and $\dim_{\Hau} V_{f,t}(F) = \beta$. 
\end{theorem}
\end{comment}%%%%%%%%%%%%%%5
%%%%%%%%%%%%%%%%%%%%%%%%

\begin{comment}%%%%
\begin{example}
	Let $f(x) = ax+b$ for $a\ne 0$.
For $(x,u_k,\ldots , u_0) \in J^k(\R)$,
\begin{align*}
((x,u_k,\ldots  , u_0) \odot j_{x-t}^k(f)^{-1})_0 &=  (t-x)(u_1-a) + (u_0-a(x-t)-b)  + \sum_{j=2}^k \frac{(t-x)^{j-s}}{(j-s)!}\cdot u_j \\
& = (x-t)\cdot -u_1 + u_0-b + \sum_{j=2}^k \frac{(t-x)^{j}}{j!}\cdot u_j; \\
((x,u_k,\ldots  , u_0) \odot j_{x-t}^k(f)^{-1})_1 &= u_1-a + \sum_{j=2}^k \frac{(t-x)^{j-1}}{(j-1)!}\cdot u_j; \text{ and} \\
((x,u_k,\ldots  , u_0) \odot j_{x-t}^k(f)^{-1})_s&=\sum_{j=s}^k \frac{(t-x)^{j-s}}{(j-s)!}\cdot u_j, \qquad s= 2,\ldots , k.
\end{align*}

\end{example}
\end{comment}%%%%%%

\begin{comment}%%%%%%%
Then
\begin{align*}
&((x,u_k,\ldots  , u_0) \odot j_{x-t}^k(f)^{-1})_{0} = (t-x)(u_1-a) + (u_0-a(x-t)-b) = (x-t)\cdot -u_1 + u_0-b;\\
&((x,u_k,\ldots  , u_0) \odot j_{x-t}^k(f)^{-1})_{1} =\sum_{j=1}^k u_1 -a; \text{ and}\\
	&((x,u_k,\ldots  , u_0) \odot j_{x-t}^k(f)^{-1})_s=0, \qquad s= 2,\ldots , k.
\end{align*}
Motivated by this, define $g(x) = x^{k-1}$. 
As $g^{(k-1)}(x) = (k-1)! $ and $g^{(k)}(x)  =0$,
\[
	V_{f,t}(j_x^k(g)) = j_x^k(g) \odot j_{x-t}^k(f)^{-1}= (t, 0 ,\ldots , 0 , (k-1)!-a, -(k-1)!\cdot (x-t)-b), \qquad x\in\R.
\]
Thus,
\[
	d(V_{f,t}(j_x^k(g)) , V_{f,t}(j_y^k(g))) = \sqrt[k+1]{(k-1)! \cdot |x-y|}, \qquad x,y\in \R.
\]
This implies
\[
	\dim_{\Hau} V_{f,t}(\im (j^k(g)))= (k+1) \dim_{\Hau}(\R) = (k+1).
\]
As $j^k(g):\R\to J^k(\R)$ is horizontal,  we have
\[
	\dim_{\Hau} V_{f,t}(\im (j^k(g))) = (k+1) = (k+1) \dim_{\Hau}(\text{im} (j^k(g))). 
\]
Note that if we replaced $\text{im}(j^k(g))$ with $\{j_x^k(g):x\in E\}$ for a Cantor-like set $E$ (as in Theorem \ref{Cantor-exist}), the same proof shows 
\[
	\dim_{\Hau} V_{f,t}(\{j_x^k(g):x\in E\}) = (k+1) \dim_{\Hau}(E). 
\]
This shows that for all $\alpha \in [0,1]$, there exists a set $E_\alpha  \in J^k(\R)$ with
\[
	\dim_{\Hau} V_{f,t}(E_\alpha) = (k+1) \dim_{\Hau}(E) = (k+1)\alpha.
\]

Is it possible to replace the coefficient of $(k+1)$ with 
\end{example}
\end{comment}%%%%%%

%%%%%%%%%%%%%%%%%%%%%%%%%%%%%%%%%%%%%%%%%%%%%%%%%%%
%%%%%%%%%%%%%%%%%%%%%%%%%%%%%%%%%%%%%%%%%%%%%%%%%%%
%%%%%%%%%%%%%%%%%%%%%%%%%%%%%%%%%%%%%%%%%%%%%%%%%%%
%Section 9: Topological dimension
%%%%%%%%%%%%%%%%%%%%%%%%%%%%%%%%%%%%%%%%%%%%%%%%%%%
%%%%%%%%%%%%%%%%%%%%%%%%%%%%%%%%%%%%%%%%%%%%%%%%%%%
%%%%%%%%%%%%%%%%%%%%%%%%%%%%%%%%%%%%%%%%%%%%%%%%%%%

\section{Mappings and topological dimension}\label{sec:top-dim}

In Example \ref{plane-example}, we broached the question of how the mappings $J_{f,t}$ and $V_{f,t}$ affect topological dimension.
We will continue that study in this section by proving that all of the possible pairs $(\dim_{\Top}J_{f,t}(E), \dim_{\Top}V_{f,t}(E))$ can be attained as $E$ varies over Borel sets while $t$ and $f$ are fixed. 
In fact, the set $E$ can be chosen independently of $f$ (if we ask for $\dim_{\Top}V_{f,t}(E)\ge 1$). 

In Example \ref{plane-example}, we saw that for all $t\in \R$ and $p\in \{x=t\}$,  
\[
	V_{f,t} (p) = p - j_0^k(f).
\]
In particular, for all $E\subset \{x=t\},$
\[
	\dim_{\Top}V_{f,t}(E) = \dim_{\Top}(E) \quad\text{and}\quad \dim_{\Top}J_{f,t}(E) =0
\]
for all $f\in C^\infty(\R)$. 
Moreover, for all $p\in \{x=t\}$,
\[
	\dim_{\Top} V_{f,t}(p\odot \im (j^k(f))) = 0 \quad\text{and}\quad \dim_{\Top} J_{f,t}(p\odot \im (j^k(f))) =1
\]
(compare with set constructed in proof of Proposition \ref{large-Jft}).

By taking the union of these two sets, we have the following:
\begin{proposition}
Fix $f\in C^\infty(\R)$ and $t\in \R$.
For all pairs $(a,b) \in \{0,1\}\times \{0,\ldots , k+1\}$, there exists a set $E_{a,b}$ satisfying
\[
		\dim_{\Top}J_{f,t}(E_{a,b}) = a \quad\text{and}\quad \dim_{\Top}V_{f,t}(E_{a,b}) =b.
\]
\end{proposition}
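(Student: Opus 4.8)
The plan is to construct the pair-attaining set $E_{a,b}$ as a union of two pieces, each handling one coordinate of the target pair, exactly as the preceding discussion suggests. For the vertical component, I would place a subset inside the hyperplane $\{x=t\}$, where Example \ref{plane-example} shows that $V_{f,t}$ acts by the fixed-vector subtraction $p\mapsto p-j_0^k(f)$; this is a homeomorphism of $\{x=t\}$ onto itself and hence preserves topological dimension, while $J_{f,t}$ collapses all of $\{x=t\}$ to the single point $j_0^k(f)$. For the horizontal component, I would use a set of the form $p\odot\im(j^k(f))$ (or a subcurve thereof), on which $J_{f,t}$ acts as a homeomorphism onto a $1$-dimensional curve and $V_{f,t}$ collapses everything to the single point $p$, as computed in Proposition \ref{large-Jft}.

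The key steps are as follows. First I would fix, for each target value $b\in\{0,\ldots,k+1\}$, a subset $B_b\subset\{x=t\}$ with $\dim_{\Top}(B_b)=b$; since $\{x=t\}$ is homeomorphic to $\R^{k+1}$, standard topological-dimension theory (see \cite{HW:DT}) guarantees such subsets exist for every $b$ between $0$ and $k+1$, e.g.\ a point, a segment, a square, and so on up to $\{x=t\}$ itself. By Proposition \ref{bilip-Vft}, $\dim_{\Top}V_{f,t}(B_b)=\dim_{\Top}(B_b)=b$, and since $B_b\subset\{x=t\}$ forces $J_{f,t}(B_b)=\{j_0^k(f)\}$, we get $\dim_{\Top}J_{f,t}(B_b)=0$. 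Second, for the target value $a\in\{0,1\}$, I would set $A_0=\{p_0\}$ a single point of $\{x=t\}$ (giving $\dim_{\Top}J_{f,t}(A_0)=0$) and $A_1=p_0\odot\im(j^k(f))$ (giving $\dim_{\Top}J_{f,t}(A_1)=\dim_{\Top}\im(j^k(f))=1$ via the biLipschitz map of Proposition \ref{RW-obs}), each satisfying $\dim_{\Top}V_{f,t}(A_a)=0$ by the collapsing computed in Proposition \ref{large-Jft}. Third, I would take $E_{a,b}:=A_a\cup B_b$ and compute the images under each mapping.

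The final step is to verify that the images separate cleanly under the union. Here I would use that topological dimension is monotone and behaves well under finite unions of closed sets: $V_{f,t}(E_{a,b})=V_{f,t}(A_a)\cup V_{f,t}(B_b)$ is the union of a point with a set of dimension $b$, so its topological dimension is $b$; and $J_{f,t}(E_{a,b})=J_{f,t}(A_a)\cup J_{f,t}(B_b)$ is the union of a set of dimension $a$ with the single point $j_0^k(f)$, giving dimension $a$. To make the union argument airtight one should choose the pieces to be compact (hence closed) and invoke the countable sum theorem for topological dimension, which states that the dimension of a countable union of closed sets equals the supremum of their dimensions.

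The main obstacle I anticipate is not any single estimate but rather the bookkeeping of which mapping collapses which piece: one must be careful that $A_a$ and $B_b$ are each chosen so that exactly one of $J_{f,t},V_{f,t}$ sees nontrivial dimension while the other collapses to a point, and that the two pieces do not interfere when unioned. Placing $A_a$'s basepoint $p_0$ inside $\{x=t\}$ ensures the two pieces live in compatible ambient coordinates and that the collapsing computations from Example \ref{plane-example} and Proposition \ref{large-Jft} apply verbatim. A secondary subtlety is confirming that subsets of $\{x=t\}\cong\R^{k+1}$ of every integer topological dimension from $0$ to $k+1$ genuinely exist, but this is classical and requires no new work. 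Everything else reduces to the additivity of topological dimension under finite closed unions together with the regularity facts already established in the excerpt.
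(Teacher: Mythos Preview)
Your proposal is correct and follows essentially the same approach as the paper: the paper's proof is the paragraph immediately preceding the proposition, which takes a subset of $\{x=t\}$ for the $V_{f,t}$-dimension and a translated jet image $p\odot\im(j^k(f))$ for the $J_{f,t}$-dimension, then forms their union. Your write-up is in fact more careful than the paper's, explicitly invoking the countable sum theorem for topological dimension and noting the need for closed pieces, whereas the paper leaves these bookkeeping points implicit.
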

This set is highly dependent on $f$ and $t$. 
Hence, we could ask if the set could be constructed to have the desired dimensions independent of $f$ or $t$. 
It turns out that we can construct the set to be independent of $t$ (at least for $\dim_{\Top}V_{f,t}(E)\ge 1$).

\begin{proposition}\label{top-ind-f}
For all $t\in \R$ and pairs $(a,b) \in  \{0,1\}\times \{1,\ldots , k+1\}$, there exists a set $F_{a,b} $ satisfying
\[
	\dim_{\Top}J_{f,t}(F_{a,b}) = a \quad\text{and}\quad \dim_{\Top}V_{f,t}(F_{a,b}) =b
\]
for all $f\in C^\infty(\R)$.
Moreover, $\dim_{\Top}(F_{a,b}) = b. $
\end{proposition}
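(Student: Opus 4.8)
The plan is to exhibit explicit \emph{compact} sets built only from a box sitting inside the target plane and a single horizontal segment, neither of which mentions $f$, and to control the topological dimensions of the images by invoking the finite sum theorem for closed sets from dimension theory \cite{HW:DT}, namely that $\dim_{\Top}(A\cup B) = \max\{\dim_{\Top}(A),\dim_{\Top}(B)\}$ whenever $A$ and $B$ are compact. First I would fix $t$ and set $P_b := \{t\}\times [0,1]^b\times \{0\}^{k+1-b}\subset \{x=t\}$ together with the $x$-axis segment $L := [0,1]\times \{0\}^{k+1}$. Both are compact, with $\dim_{\Top}(P_b) = b$ and $\dim_{\Top}(L) = 1$, and crucially both are defined without reference to $f$.

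For the case $a=0$ I would take $F_{0,b} := P_b$. Since $P_b\subset \{x=t\}$, the horizontal map collapses it to a point, $J_{f,t}(P_b) = \{j_0^k(f)\}$, of topological dimension $0$; and by Proposition \ref{bilip-Vft} (whose explicit form $V_{f,t}(p)=p-j_0^k(f)$ is computed in Example \ref{plane-example}) the restriction $V_{f,t}|_{\{x=t\}}$ is a homeomorphism, so $V_{f,t}(P_b)$ is a translate of $P_b$ and has topological dimension $b$. Both statements hold for every $f$, and $\dim_{\Top}(F_{0,b}) = b$ is immediate.

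For the case $a=1$ I would take $F_{1,b} := P_b\cup L$. The horizontal image is $J_{f,t}(F_{1,b}) = \{j_0^k(f)\}\cup \{j_{x-t}^k(f):x\in [0,1]\}$; by Proposition \ref{RW-obs} the map $y\mapsto j_y^k(f)$ is a homeomorphism onto its image, so this is a compact subset of the one-dimensional set $\im(j^k(f))$ containing an arc, whence $\dim_{\Top}J_{f,t}(F_{1,b}) = 1$ for every $f$. For the vertical image, $V_{f,t}(F_{1,b}) = V_{f,t}(P_b)\cup V_{f,t}(L)$, where $V_{f,t}(P_b)$ is again a translate of $P_b$ of dimension $b$, while by the explicit formula coming from Lemma \ref{right-multiplication} the curve $x\mapsto V_{f,t}(x,0,\ldots,0)$ is smooth, hence Lipschitz, as a map into $\R^{k+1}$; its image $V_{f,t}(L)$ therefore has Euclidean Hausdorff dimension at most $1$, and since topological dimension never exceeds Hausdorff dimension, $\dim_{\Top}V_{f,t}(L)\le 1$. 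As $b\ge 1$, both pieces are compact of dimension at most $b$, so the sum theorem yields $\dim_{\Top}V_{f,t}(F_{1,b}) = b$; the same theorem applied to $P_b\cup L$ gives $\dim_{\Top}(F_{1,b}) = b$.

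The main obstacle is exactly the upper bounds on the dimensions of these unions: a priori, attaching a curve to a $b$-dimensional set could raise its dimension, and a merely continuous image of an interval can even be space-filling. Both dangers are defused by insisting on compact sets, so that the closed sum theorem applies, and by noting that $V_{f,t}$ restricted to $L$ is genuinely Lipschitz into Euclidean space, which rules out any dimension increase along the segment. This is also what forces the hypothesis $b\ge 1$: it guarantees that the dimension-one horizontal contributions cannot exceed $b$, which is precisely why the $f$-independent construction is available only when $\dim_{\Top}V_{f,t}(E)\ge 1$.
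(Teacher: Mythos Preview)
Your proof is correct and follows the same overall strategy as the paper: take a $b$-dimensional box $P_b$ inside the plane $\{x=t\}$ to control the vertical image, and for $a=1$ adjoin a one-dimensional segment transverse to the plane to force $\dim_{\Top}J_{f,t}=1$, then combine via the closed sum theorem. The paper's construction is the same in outline.

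The one genuine difference is in the choice of the auxiliary segment and the work it entails. The paper uses the ``diagonal'' curve $F=\{(x,x,0,\ldots,0):x\in(t+1,t+2)\}$ and proves that $V_{f,t}(F)$ has topological dimension \emph{exactly} one by a contradiction argument showing the image curve is nonconstant for every $f$. You instead take the $x$-axis segment $L=[0,1]\times\{0\}^{k+1}$ and observe that, since $b\ge 1$, you only need $\dim_{\Top}V_{f,t}(L)\le 1$, which is immediate from the smoothness of $x\mapsto V_{f,t}(x,0,\ldots,0)$; no nonconstancy argument is required (indeed for constant $f$ your $V_{f,t}(L)$ is a single point, which is harmless). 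This is a real simplification: the paper's nonconstancy step is unnecessary once one notices that only the upper bound on the segment's vertical image matters. Your more explicit appeal to compactness and the closed sum theorem is also cleaner than the paper's somewhat implicit treatment of the union.
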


\begin{proof}
	Fix $t\in \R$ and a pair $(a,b)\in  \{0,1\}\times \{1,\ldots , k+1\}$.
Let $E\subset \{x=t\}$ be a set with $\dim_{\Top}(E) = b$.
Then $\dim_{\Top} V_{f,t}(E) = b$ and $\dim_{\Top}J_{f,t}(E) = 0$ (see Example \ref{plane-example}).  
If $a=0$, we are done.

Now suppose $a=1$. 
Consider $F:= \{(x,x,0,\ldots , 0):x\in (t+1,t+2)\}$.
We will show $\dim_{\Top}J_{f,t}(F) = 1$ and $\dim_{\Top}V_{f,t}(F) =1$ for all $f\in C^\infty(\R)$.
Then $F_{a,b}:= E\cup F$ will work to prove the proposition. 

To show every $J_{f,t}(F)$ and $V_{f,t}(F)$ have topological dimension one, it suffices to show each is the image of a nonconstant smooth curve.
Indeed, the image of every smooth curve in Euclidean space has Hausdorff dimension at most one, which bounds the topological dimension by one \cite[Theorem 8.14]{LOA:H}.  
On the other hand, the topological dimension must be at least one as a consequence of the Inverse Function Theorem. 

We see that $J_{f,t}(F)$ is a smooth nonconstant curve for all $f\in C^\infty(\R)$ since $J_{f,t}(x,x,0,\ldots , 0) = j_{x-t}^k(f)$ for all $x \in (t+1,t+2)$. 
To see $V_{f,t}(F)$ is a smooth curve, note that
\[
	V_{f,t}(x,x,0,\ldots  0 ) = (x,x,0,\ldots  , 0) \odot j_{x-t}^k(f)^{-1}
\] 
for all $x\in (t+1,t+2)$.

Suppose, for contradiction, that $x\mapsto (x,x,0,\ldots  , 0) \odot j_{x-t}^k(f)^{-1}$, $x\in (t+1,t+2)$, is constant for some $f\in C^\infty(\R)$.
Then there is $c\in \R$ such that 
\[
	c = ((x,x,0,\ldots  , 0) \odot j_{x-t}^k(f)^{-1})_k= x- f^{(k)}(x-t)
\]
for all $x\in (t+1,t+2)$. 
This implies for some $d\in \R$, \[f^{(k-1)}(x-t) = \frac{1}{2}x^2 -cx + d\] for all $x\in (t+1,t+2)$.
Then by Proposition \ref{right-multiplication}, 
\begin{align*}
	((x,x,0,\ldots  , 0) \odot j_{x-t}^k(f)^{-1})_{k-1} 
&= -f^{(k-1)}(x-t) -(x-t)(x-  f^{(k)}(x-t))\\
& = - \frac{1}{2}x^2 +cx - d -c(x-t)
\end{align*}
must also be constant, but it is clearly not by the presence of the $-\frac{1}{2}x^2$ term.
This proves that $V_{f,t} $ is a nonconstant smooth curve, and the proposition follows.
\end{proof}

The question of whether we can construct the set independently seems much more difficult, and we will leave it open. 

\begin{question}
 For a fixed $f\in C^\infty(\R)$ and pair $(a,b) \in \{0,1\}\times \{0,\ldots , \frac{(k+1)(k+2)}{2}\}$, does there exist a set $G_{a,b}$ satisfying 
\[
\dim_{\Top}J_{f,t}(G_{a,b}) = a \quad\text{and}\quad \dim_{\Top}V_{f,t}(G_{a,b}) =b
\]
for all $t\in \R$?
\end{question} 

%%%%%%%%%%%%%%%%%%%%%%%%%%%%%%%%%%%%%%%%%%%%%%%%%%%
%%%%%%%%%%%%%%%%%%%%%%%%%%%%%%%%%%%%%%%%%%%%%%%%%%%
%%%%%%%%%%%%%%%%%%%%%%%%%%%%%%%%%%%%%%%%%%%%%%%%%%%
%Section 10: More open questions
%%%%%%%%%%%%%%%%%%%%%%%%%%%%%%%%%%%%%%%%%%%%%%%%%%%
%%%%%%%%%%%%%%%%%%%%%%%%%%%%%%%%%%%%%%%%%%%%%%%%%%%
%%%%%%%%%%%%%%%%%%%%%%%%%%%%%%%%%%%%%%%%%%%%%%%%%%%

\section{Open questions and mappings of general jet spaces}\label{sec:open-proj}

We conclude with a few open questions related to fixing a Borel set and letting $t$ or $f$ vary. 
We also note that an analogous construction could be performed to define mappings of general jet space Carnot groups. 

We have not considered how $\dim_{\Hau}V_{f,t}(E)$ changes as $t$ varies while $f$ and $E$ are fixed. 
For fixed $t\in \R$, $p\in \{x=t\}$, and $f\in C^\infty(\R)$, we showed 
\[
	\dim_{\Hau} V_{f,t}(p\odot \im j^k(f))= 0 .
\]
For all $p\in \{x=t\}$ and $s\ne t$, the map $x \mapsto V_{f,s}(p\odot j^k_x(f))$ will be a nonconstant curve if $f$ isn't a constant function.
This implies
\[
	\dim_{\Hau}V_{f,s}(p\odot\im j^k(f)) \ge 1. 
\]
This motivates the following general question.
\begin{question}
	Fix $ f\in C^\infty(\R)$ and a Borel set $E\subset J^k(\R)$.
What is the behavior of the function $t\mapsto \dim_{\Hau}V_{f,t}(E)$? 
For example, does it attain a finite number of values? 
What is its regularity? 
Is there a value that it attains almost everywhere?
\end{question}

The final set of questions we pose seems to be the most difficult one. 
What happens if we let the function $f$ vary as $t$ and $E$ are fixed?
Recall the notion of prevalence in the sense of Hunt, Sauer, and Yorke \cite{HSY:PA, OY:P, SY:ATD}.
Prevalence provides a notion of ``almost everywhere'' in an infinite-dimensional Banach space, such as $C^{k+1}(\R)$.
In 2013, Balogh, Tyson, and Wildrick showed that the set of Newtonian-Sobolev functions that maximally increase Hausdorff dimension is prevalent within a certain Newtonian-Sobolev space \cite[Theorem 1.2]{BTW:DD}. 
One could study an analogous problem in our setting.

\begin{question}
Fix $t\in \R$ and a Borel set $E \subset J^k(\R)$.
What are the behaviors of the functions $f\mapsto \dim_{\Hau} J_{f,t}(E)$ and $f\mapsto \dim_{\Hau}V_{f,t}(E)$, where $f$ ranges over $C^{k+1}(\R)$?
Are there topologies on $C^{k+1}(\R)$ under which these two functions are continuous?
Do there exist $\alpha,\beta\in \R$ for which the set of $f\in C^{k+1}(\R)$ satisfying $\dim_{\Hau} J_{f,t}(E) = \alpha$ and $\dim_{\Hau}V_{f,t}(E) =\beta$ is prevalent?
\end{question}

We conclude by noting that one could define  vertical and horizontal mappings in the same way in general jet space Carnot groups $J^k(\R^n)$ (see \cite[Section 4.4]{JSA:W} for  discussion and notation of these groups).
For the  vertical planes, one would take the codimension-$n$ planes $\{x=t\}:= \{(x,u^{(k)})\in J^k(\R^n):x=t\}$ for $t\in \R^n$. 
Then for every $f\in C^\infty(\R^n)$, $t\in \R^n$, and $p\in J^k(\R^n)$, there exist uniquely  $p_V \in \{x=t\}$ and $p_H\in \im (j^k(f))$ such that 
\[
	p = p_V\odot p_H. 
\]
One could then prove analogues of every result in this paper for general jet space Carnot groups.

%%%%%%%%%%%%%%%%%%%%%%%%%%%%%%%%%%%%%%%%%%%%%%%%%%%
%%%%%%%%%%%%%%%%%%%%%%%%%%%%%%%%%%%%%%%%%%%%%%%%%%%
%%%%%%%%%%%%%%%%%%%%%%%%%%%%%%%%%%%%%%%%%%%%%%%%%%%

\bibliographystyle{plain}
\bibliography{ThesisJung}

\end{document}